\newtheorem{thm}{Theorem}[section]
\newtheorem{lemma}[thm]{Lemma}
\newtheorem{rem}[thm]{Remark}
\newtheorem{conj}[thm]{Conjecture}
\numberwithin{equation}{section}
\newcommand{\s}{\sigma}
\newcommand{\nnn}{n=p_1^{a_1}.p_2^{a_2}.p_3^{a_3}.p_4^{a_4}}
\newcommand{\ord}{\textup{ord}}
\newcommand{\ff}{f(a_1,a_2,a_3,a_4)}
\newcommand{\nn}{p_1^{a_1}.p_2^{a_2}.p_3^{a_3}.p_4^{a_4}}
\newcommand{\df}{p_1^{b_1}.p_2^{b_2}.p_3^{b_3}.p_4^{b_4}}
\newcommand{\dff}{p_1^{a_1-b_1}.p_2^{a_2-b_2}.p_3^{a_3-b_3}.p_4^{a_4-b_4}}
\newcommand{\aimod}{a_i\equiv 0 \pmod 2, i=1,2,3,4}
\newcommand{\g}{g(a_1,a_2,a_3,a_4)}
\theoremstyle{remark}
\numberwithin{equation}{section}
\edef\orig@output{\the\output}
\begin{document}
	
\title[Deficient Perfect Numbers]{On Deficient Perfect Numbers with Four Distinct Prime Factors}

\author[P. Dutta]{Parama Dutta}
\address{Department of Mathematical Sciences, Tezpur University, Napaam 784028, Dist. Sonitpur, Assam, India}
\email{parama@gonitsora.com}

\author[M. P. Saikia]{Manjil P. Saikia}
\address{Fakult\"at f\"ur Mathematik, Universit\"at Wien, Oskar-Morgensten-Platz 1, 1090 Vienna, Austria}
\email{manjil.saikia@univie.ac.at, manjil@gonitsora.com}

\date{}

\begin{abstract}
For a positive integer $n$, if $\sigma(n)$ denotes the sum of the positive divisors of $n$, then $n$ is called a deficient perfect number if $\sigma(n)=2n-d$ for some positive divisor $d$ of $n$. In this paper, we prove some results about odd deficient perfect numbers with four distinct prime factors.
\end{abstract}

\subjclass[2010]{Primary 11A25; Secondary 11A41, 11B99.}

\keywords{almost perfect numbers, deficient perfect numbers, near perfect numbers.}

\maketitle

\section{Introduction}\label{intro}

For a positive integer $n$, the functions $\s(n)$ and $\omega(n)$ denote the sum and number of distinct positive prime divisors of $n$ respectively. Such an $n$ is called a perfect number if $\s(n)=2n$. These type of numbers have been studied since antiquity and several generalizations of these numbers have appeared over the years (see \cite{mps} and the references therein for some of them). In fact, one of the most outstanding problems in number theory at the moment is to determine whether an odd perfect number exists or not.

Let $d$ be a proper divisor of $n$. We call $n$ a near perfect number with redundant divisor $d$ if $\s(n)=2n+d$; and a deficient perfect number with deficient divisor $d$ if $\s(n)=2n-d$. If $d=1$, then such a deficient perfect number is called an almost perfect number. Several results have been proved about these classes of numbers: for instance, Kishore \cite{kk} proved that if $n$ is an odd almost perfect number then $\omega(n)\geq 6$, Pollack and Shevelev \cite{ps} found upper bounds on the number of near perfect numbers and characterized three different types of such numbers for even values, Ren and Chen \cite{conj} found all near perfect numbers with two distinct prime factors, Tang, and Ren and LI \cite{collo} showed that no odd near perfect number exists with three distinct prime factors and determined all deficient perfect numbers with two distinct prime factors. In a similar vein, Tang and Feng \cite{df} showed that no odd deficient perfect number exists with three distinct prime factors. Recently, Tang, Ma and Feng \cite{colloq} showed that there exists only one odd near perfect number with four distinct prime divisors. The smallest known odd deficient perfect number with four distinct prime factors is $9018009 = 3^2.7^2.11^2.13^2$, and it is the only such number until $2.10^{12}$.

In this paper, we extend the work of Tang and Feng \cite{df} and prove the following main result.

\begin{thm}\label{th-1}
If $n$ is an odd deficient perfect number with four distinct prime factors $p_1, p_2, p_3$ and $p_4$ such that $n=p_1^{a_1}.p_2^{a_2}.p_3^{a_3}.p_4^{a_4}$ with $p_1<p_2<p_3<p_4$ and $a_1,a_2,a_3,a_4 \geq 1$, then

\begin{enumerate}
\item $p_1=3$, and
\item $5\leq p_2 \leq 7$.
\end{enumerate}
\end{thm}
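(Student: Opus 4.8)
The plan is to convert the divisibility condition on $d$ into a rigidity statement about the abundancy $\sigma(n)/n$. Put $m:=n/d$; then the hypothesis $\sigma(n)=2n-d$ with $d\mid n$ says exactly that $m$ is a positive integer with $m\mid n$ and $\sigma(n)/n=2-1/m$. Since $\sigma(n)\ge n+1>n$ we get $m\ge 2$, and since $m\mid n$ with $n$ odd, $m$ is odd, so $m\ge 3$. A free observation is that $d$, being a divisor of the odd number $n$, is odd, whence $\sigma(n)=2n-d$ is odd; as $\sigma(p^{a})$ is odd for an odd prime $p$ only when $a$ is even, this forces $a_1,a_2,a_3,a_4$ to all be even, in particular $\ge 2$. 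Throughout I will use multiplicativity, $\sigma(n)/n=\prod_{i=1}^{4}\sigma(p_i^{a_i})/p_i^{a_i}$, together with the elementary bounds $\sigma(p^{a})/p^{a}=\frac{p^{a+1}-1}{p^{a}(p-1)}<\frac{p}{p-1}$ and, for $a\ge 2$, $\sigma(p^{a})/p^{a}\ge 1+\frac1p+\frac1{p^{2}}$.

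To prove (1), suppose $p_1\ge 5$, so that $(p_1,p_2,p_3,p_4)$ is termwise at least $(5,7,11,13)$. Since $x\mapsto x/(x-1)$ is decreasing, $\sigma(n)/n<\frac54\cdot\frac76\cdot\frac{11}{10}\cdot\frac{13}{12}=\frac{1001}{576}<\frac74$, whence $1/m=2-\sigma(n)/n>\frac14$ and $m<4$. Being odd and $\ge 3$, $m=3$; but $m\mid n$ then gives $3\mid n$, contradicting $p_1\ge 5$. Hence $p_1=3$.

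For (2), the lower bound is immediate: $p_2>p_1=3$ is prime, so $p_2\ge 5$. For the upper bound, suppose $p_2\ge 11$, so $(p_1,\dots,p_4)$ is termwise at least $(3,11,13,17)$; then $\sigma(n)/n<\frac32\cdot\frac{11}{10}\cdot\frac{13}{12}\cdot\frac{17}{16}=\frac{2431}{1280}<\frac{19}{10}$, giving $1/m>\frac1{10}$ and $m\in\{3,5,7,9\}$. The values $m=5$ and $m=7$ are immediately impossible, since $m\mid n$ would force $5\mid n$ or $7\mid n$ while the prime factors of $n$ are $3$ and primes $\ge 11$. It remains to eliminate $m=3$ and $m=9$, which the soft bounds alone do not do.

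For these two cases I would combine the integer equations $3\sigma(n)=5n$ (resp.\ $9\sigma(n)=17n$) with prime-by-prime valuation bookkeeping. From $3\sigma(n)=5n$: comparing $5$-adic valuations (and using $5\nmid n$) gives $v_5(\sigma(n))=1$, so exactly one $\sigma(p_i^{a_i})$ is divisible by $5$ and only once; comparing $3$-adic valuations (and using $3\nmid\sigma(3^{a_1})$) gives $\sum_{i\ge 2}v_3(\sigma(p_i^{a_i}))=a_1-1$, an \emph{odd} number since $a_1$ is even; and, since $p_i\nmid\sigma(p_i^{a_i})$, for each $i$ one has $p_i^{a_i}\mid 3\,\sigma(3^{a_1})\prod_{j\ne i}\sigma(p_j^{a_j})$, which combined with $\sigma(p^{a})<\frac{p}{p-1}p^{a}$ controls the largest prime in terms of the others. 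Feeding in $a_i\ge 2$ sharpens the abundancy estimates (e.g.\ $\sigma(3^{a_1})/3^{a_1}\ge\frac{13}{9}$, hence $\prod_{i\ge 2}\sigma(p_i^{a_i})/p_i^{a_i}\le\frac{15}{13}$), and playing these upper bounds against the lower bounds $\sigma(p_i^{a_i})/p_i^{a_i}\ge 1+\frac1{p_i}+\frac1{p_i^{2}}$ should cut the admissible triples $(p_2,p_3,p_4)$ down to a short finite list, each excluded by a direct computation; the case $m=9$ runs the same way, with $9\sigma(n)=17n$ forcing $a_1\ge 4$ and then pinning $(p_2,p_3,p_4)=(11,13,17)$, which fails on inspection. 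I expect this last step — turning the valuation identities together with the (for $m=3$, wide, and for $m=9$, extremely tight, of width about $10^{-2}$) abundancy constraints into a genuinely finite and ultimately empty search — to be the main obstacle, since that is where essentially all the case analysis lives and a single overlooked candidate would be fatal.
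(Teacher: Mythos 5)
Your setup is sound and matches the paper's: part (1) is exactly the paper's Lemma \ref{p1} (the bound $\sigma(n)/n<\tfrac{5\cdot7\cdot11\cdot13}{4\cdot6\cdot10\cdot12}$ plus $d/n\ge 1/m$ with $m$ an odd divisor of $n$), and your uniform reduction of $p_2\ge 11$ to $m\in\{3,9\}$ is a clean consolidation of the bounds on $D=n/d$ that the paper re-derives separately inside each of Lemmas \ref{p2-1}--\ref{p2-5}. The parity argument forcing all $a_i$ even is also correct and reproves the quoted Lemma \ref{lem-df}.

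However, the elimination of $m=3$ and $m=9$ is not a step you can defer to ``a short finite list, each excluded by a direct computation'' --- it is the entire content of the theorem, and as written your proposal does not contain it. Concretely: for $p_2=11$ the soft bounds only give $p_3\le 197$ with $p_4$ ranging further still, so the candidate set is large, and the paper needs, for each band of $(p_3,p_4)$, (i) lower bounds on the exponents obtained from auxiliary prime divisors of $\sigma(3^{a_1})$ and $\sigma(11^{a_2})$ (e.g.\ $13\mid\sigma(3^2)$, $\sigma(3^4)=11^2$, $1093\mid\sigma(3^6)$, $19\mid\sigma(11^2)$ rule out small exponents), and only then (ii) a comparison of $\prod(1-p_i^{-(a_i+1)})$ against the rational number forced by $3\sigma(n)=5n$, with margins as thin as $2\times 10^{-4}$; plus (iii) order arguments: your observation that $v_5(\sigma(n))=1$ forces $5\mid\sigma(p_i^{a_i})$ for exactly one $i$ becomes useful only after noting that $a_i+1$ odd forces $\mathrm{ord}_5(p_i)$ to be odd, i.e.\ $p_i\equiv 1\pmod 5$, which kills many triples outright and is how the paper dispatches, e.g., $(3,23,29,p_4)$ and $(3,17,43,47)$. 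None of this bookkeeping is routine enough to assert without doing it, and (as you say yourself) a single surviving candidate sinks the claim. Until the $m=3$ case is actually carried through for every admissible $(p_2,p_3,p_4)$ with $p_2\in\{11,13,17,19,23\}$, and $m=9$ is pinned to $(11,13,17)$ and killed (the paper does this with $\mathrm{ord}_{17}(3)=\mathrm{ord}_{17}(11)=16$, $\mathrm{ord}_{17}(13)=4$ all even), the proof of part (2) is incomplete.
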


This paper is organized as follows: in Section \ref{proof} we state and prove several lemmas which will be used in proving Theorem \ref{th-1}; finally in Section \ref{open} we state other results that can be obtained by our methods and state a few conjectures.

\section{Proof of Theorem \ref{th-1}}\label{proof}

We shall prove Theorem \ref{th-1} as a series of lemmas in this section. Before, we state our results, we note the following result from Tang and Feng \cite{df}.

\begin{lemma}[Lemma 2.1, \cite{df}]\label{lem-df}
Let $n=\prod_{i=1}^kp_i^{a_i}$ be the canonical prime factorization of $n$. If $n$ is an odd deficient perfect number, then all the $a_i$'s are even for all $i$.
\end{lemma}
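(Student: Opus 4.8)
The plan is to reduce the statement to a parity observation about $\s(n)$. Since $n$ is an odd deficient perfect number, by definition $\s(n) = 2n - d$ for some positive divisor $d$ of $n$. Because $n$ is odd, every divisor of $n$ is odd, so $d$ is odd; hence $2n - d$ is odd, and therefore $\s(n)$ must be odd. The key point to emphasize is that this is exactly what the deficiency relation buys us: for a general odd $n$ the quantity $\s(n)$ may have either parity, but the specific form $2n-d$ with $d\mid n$ pins it down to be odd.

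Next I would invoke multiplicativity of $\s$: writing $n=\prod_{i=1}^k p_i^{a_i}$, we have
\[
\s(n)=\prod_{i=1}^k \s(p_i^{a_i})=\prod_{i=1}^k\bigl(1+p_i+\cdots+p_i^{a_i}\bigr).
\]
Since $\s(n)$ is odd, each factor $1+p_i+\cdots+p_i^{a_i}$ must be odd. As $n$ is odd, every $p_i$ is odd, so each of the $a_i+1$ summands $p_i^{j}$ ($0\le j\le a_i$) is odd, and their sum is odd if and only if the number of summands $a_i+1$ is odd, i.e. if and only if $a_i$ is even. Running this over all $i$ gives that every $a_i$ is even, equivalently that $n$ is a perfect square.

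There is no substantial obstacle in this argument; the only thing requiring a little care is making the two implications explicit (oddness of $d$ forces oddness of $\s(n)$, and oddness of $\s(n)$ forces oddness of each $\s(p_i^{a_i})$), after which the claim follows from a one-line count of terms in the geometric sum. One could alternatively phrase the conclusion through the classical fact that $\s(m)$ is odd precisely when $m$ is a square or twice a square, then discard the ``twice a square'' case using that $n$ is odd, but the direct computation above is self-contained.
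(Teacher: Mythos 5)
Your proof is correct and complete, and it is the standard argument for this fact (the paper itself only cites Lemma 2.1 of Tang and Feng without reproducing the proof): oddness of $d$ forces $\s(n)=2n-d$ to be odd, and then multiplicativity together with the parity of the geometric sums $1+p_i+\cdots+p_i^{a_i}$ forces each $a_i$ to be even. Nothing is missing.
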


Before we proceed with our results, let us fix a few notations. Throughout this paper, unless otherwise mentioned we take $\nnn$ with $p_1<p_2<p_3<p_4$ distinct odd primes and $a_i$'s to be natural numbers. In light of Lemma \ref{lem-df} all the $a_i$'s are even. If $a$ is any integer relatively prime to $m$ such that $k$ is the smallest positive integer for which $a^k\equiv 1 \pmod m$ then, we say that $k$ is the order of $a$ modulo $m$ and denote it by $\ord_m(a)$. We also define the following function which we shall use very often in this paper $$\ff=\left(1-\frac{1}{p_1^{a_1+1}}\right)\left(1-\frac{1}{p_2^{a_2+1}}\right)\left(1-\frac{1}{p_3^{a_3+1}}\right)\left(1-\frac{1}{p_4^{a_4+1}}\right).$$ Most of the time, we shall skip specifying the $p_i$'s and the $a_i$'s if they are evident from the context.

Assuming that $n$ is an odd deficient perfect number with deficient divisor $d=\df$, then we have 
\begin{equation}\label{eq-main}
    \s(\nn)=2.\nn-\df,
\end{equation}

\noindent where $b_i\leq a_i$. Also write $D=\dff$. Then we have 
\begin{equation}\label{eq-3.2}
    2=\frac{\s(n)}{n}+\frac{d}{n}=\frac{\s(n)}{n}+\frac{1}{D}.
\end{equation}

\noindent An inequality which we will use without commentary in the following is $$\frac{\s(n)}{n}<\frac{p_1.p_2.p_3.p_4}{(p_1-1).(p_2-1).(p_3-1).(p_4-1)}.$$

\begin{lemma}\label{p1}
If $n$ is an odd deficient perfect number of the form in Theorem \ref{th-1}, then $p_1=3$.
\end{lemma}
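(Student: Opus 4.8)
The plan is to argue by contradiction: suppose $p_1 \geq 5$, so that $5 \leq p_1 < p_2 < p_3 < p_4$ are all odd primes. The basic tool is the trivial upper bound
\[
\frac{\s(n)}{n} < \frac{p_1}{p_1-1}\cdot\frac{p_2}{p_2-1}\cdot\frac{p_3}{p_3-1}\cdot\frac{p_4}{p_4-1},
\]
together with equation \eqref{eq-3.2}, namely $2 = \s(n)/n + 1/D$ where $D = \dff \geq 1$. Since $1/D \leq 1$ we get $\s(n)/n \geq 1$, which is useless, so instead I would use $1/D > 0$ to get $\s(n)/n < 2$, and then also exploit a lower bound on $\s(n)/n$ coming from the fact that each $a_i$ is at least $2$ (by Lemma \ref{lem-df}, since the $a_i$ are even and positive). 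Actually the cleaner route: from \eqref{eq-3.2}, $\s(n)/n < 2$, so
\[
\frac{p_1}{p_1-1}\cdot\frac{p_2}{p_2-1}\cdot\frac{p_3}{p_3-1}\cdot\frac{p_4}{p_4-1} > \frac{\s(n)}{n}
\]
is automatic, but I want the \emph{reverse} kind of inequality. The key observation is that $\s(n)/n = \prod \frac{\s(p_i^{a_i})}{p_i^{a_i}} > \prod \frac{p_i}{p_i-1}\cdot\left(1 - \frac{1}{p_i^{a_i+1}}\right)$ is not quite right either; rather $\s(p_i^{a_i})/p_i^{a_i} = \frac{1 - p_i^{-(a_i+1)}}{1-p_i^{-1}} \cdot$ — let me just say $\s(n)/n = \frac{p_1p_2p_3p_4}{(p_1-1)(p_2-1)(p_3-1)(p_4-1)}\cdot \ff$ where $\ff$ is the function defined in the excerpt.

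So \eqref{eq-3.2} becomes $2 = \dfrac{p_1p_2p_3p_4}{(p_1-1)(p_2-1)(p_3-1)(p_4-1)}\,\ff + \dfrac{1}{D}$. Since $\ff < 1$ and $1/D \leq 1$, if $p_1 \geq 5$ then $p_1,p_2,p_3,p_4 \geq 5,7,11,13$, giving $\frac{p_1p_2p_3p_4}{(p_1-1)(p_2-1)(p_3-1)(p_4-1)} \leq \frac{5}{4}\cdot\frac{7}{6}\cdot\frac{11}{10}\cdot\frac{13}{12}$, which is roughly $1.74$. Combined with $\ff < 1$ this forces $\s(n)/n < 1.74$, hence $1/D > 0.26$, so $D < 4$, meaning $D \in \{1, 3\}$ (odd, less than $4$) — but $D = \dff$ is a divisor of $n$ all of whose prime factors are among $p_1,\ldots,p_4 \geq 5$, so $D = 1$. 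Thus $d = n$, i.e. $\s(n) = n$, which is impossible for $n > 1$. That would be too easy, so the real content must be that this crude bound is not good enough and one needs the finer estimate. Let me reconsider: the product $\frac{5}{4}\cdot\frac{7}{6}\cdot\frac{11}{10}\cdot\frac{13}{12} \approx 1.737$, and we need $\ff$ large, but $\ff$ with $a_i = 2$ for the smallest primes, $\ff \geq (1-5^{-3})(1-7^{-3})(1-11^{-3})(1-13^{-3}) \approx 0.989$, so $\s(n)/n$ could be as large as about $1.72$ but also the primes could be larger, making it smaller. Either way $\s(n)/n < 2$ forces $1/D$ small only if $\s(n)/n$ is close to $2$, which it cannot be. So in fact the obstruction is the opposite direction — I think the actual argument is: $2 - \s(n)/n = 1/D$, and $D \geq 1$ so $\s(n)/n \geq 1$; combined with the \emph{lower} bound $\s(n)/n > \frac{p_1p_2p_3p_4}{(p_1-1)(p_2-1)(p_3-1)(p_4-1)} \cdot \ff$ being hard to push above $1$ when primes are large. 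This needs care.

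The cleanest correct plan: since $D \mid n$ and $D \geq 1$, we have $1/D \le 1$; but more usefully, if $p_1 \ge 5$ then the smallest prime power appearing in $D$ (if $D>1$) is at least $5$, so either $D = 1$ or $D \ge 5$. If $D = 1$ then $d = n$ and $\s(n) = n$, impossible. So $D \ge 5$, giving $\s(n)/n = 2 - 1/D \ge 2 - 1/5 = 9/5$. On the other hand $\s(n)/n < \frac{p_1p_2p_3p_4}{(p_1-1)(p_2-1)(p_3-1)(p_4-1)} \le \frac{5}{4}\cdot\frac{7}{6}\cdot\frac{11}{10}\cdot\frac{13}{12} = \frac{5005}{2880} < \frac{9}{5}$, since $5005 \cdot 5 = 25025 < 25920 = 2880 \cdot 9$. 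This contradiction finishes the case $p_1 \ge 5$ immediately, and hence $p_1 = 3$. I expect the main (only) obstacle is verifying the single numerical inequality $\frac{5}{4}\cdot\frac{7}{6}\cdot\frac{11}{10}\cdot\frac{13}{12} < \frac{9}{5}$ and correctly handling the corner case $D = 1$ (which forces $n$ perfect, excluded since deficient-perfect requires $d$ a proper divisor and $\s(n) = 2n - d < 2n$, but here $\s(n) = n < 2n$ is consistent — so instead I should note $\s(n) = n$ is impossible for any $n > 1$). I would present it as: assume $p_1 \ge 5$; show $D \ne 1$; conclude $\s(n)/n \ge 9/5$; contradict with the product bound; done.
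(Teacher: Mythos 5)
Your final plan is correct and is essentially the paper's own argument: the paper likewise assumes $p_1\geq 5$ and derives the contradiction $2=\frac{\s(n)}{n}+\frac{d}{n}<\frac{5\cdot 7\cdot 11\cdot 13}{4\cdot 6\cdot 10\cdot 12}+\frac{1}{5}<2$, which is just your inequality $\frac{9}{5}\leq \s(n)/n<\frac{5005}{2880}$ rearranged. If anything you are slightly more careful than the paper, which silently uses $d/n\leq 1/5$ without remarking that $D=1$ must be excluded (it is, since $d$ is by definition a proper divisor, or since $\s(n)=n$ fails for $n>1$).
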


\begin{proof}
If $p_1\geq 5$, then from equation \ref{eq-3.2} we have $$2=\frac{\s(n)}{n}+\frac{d}{n}<\frac{5.7.11.13}{4.6.10.12}+\frac{1}{5}<2,$$ which is impossible. So, $p_1=3$.
\end{proof}

\begin{lemma}\label{p2}
If $n$ is an odd deficient perfect number of the form in Theorem \ref{th-1}, then $p_2\leq 23$.
\end{lemma}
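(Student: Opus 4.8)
The plan is to bound $p_2$ from above by pushing the same inequality used in Lemma \ref{p1} one step further, now that we know $p_1=3$. From \eqref{eq-3.2} we have
$$
2 = \frac{\s(n)}{n} + \frac{d}{n} < \frac{3\cdot p_2\cdot p_3\cdot p_4}{2\cdot(p_2-1)\cdot(p_3-1)\cdot(p_4-1)} + \frac{d}{n},
$$
and since $d$ is a proper divisor of $n$ the crudest bound $d/n \le 1/p_4 < 1/p_3$ is too weak on its own; the useful observation is that $d/n = 1/D$ where $D=\dff$ is an integer $\ge 1$, but more to the point $d \le n/p_4$ only when $b_4<a_4$, which need not hold. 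Instead I would argue: if $p_2 \ge 29$ (say), then $p_3\ge 31$, $p_4\ge 37$, so
$$
\frac{\s(n)}{n} < \frac{3}{2}\cdot\frac{29}{28}\cdot\frac{31}{30}\cdot\frac{37}{36} < 2 - \frac{1}{p_4} \le 2 - \frac{d}{n},
$$
provided one checks the numerics make the gap large enough; combined with \eqref{eq-3.2} this is a contradiction. One then narrows the window $p_2 \in \{5,7,11,13,17,19,23\}$ by refining the constant and using that $\s(n)/n$ must in fact exceed $2-1/3$ (since $d\le n/3$ always, as $3\mid n$), forcing $\s(n)/n > 5/3$, which rules out the largest candidates.

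Concretely, the key steps in order: (1) substitute $p_1=3$ into the product bound for $\s(n)/n$; (2) note that because $3=p_1\mid n$ and $d\mid n$ with $d<n$, we have $d\le n/3$, hence from \eqref{eq-3.2} that $\s(n)/n = 2 - d/n \ge 2 - 1/3 = 5/3$; (3) therefore
$$
\frac{5}{3} \le \frac{\s(n)}{n} < \frac{3}{2}\cdot\frac{p_2}{p_2-1}\cdot\frac{p_3}{p_3-1}\cdot\frac{p_4}{p_4-1};
$$
(4) bound the tail $\frac{p_3}{p_3-1}\cdot\frac{p_4}{p_4-1}$ using $p_3 > p_2$, $p_4 > p_3 > p_2$, i.e. $p_3\ge$ the next prime after $p_2$ and $p_4\ge$ the prime after that, to get an inequality purely in $p_2$; (5) solve that inequality to conclude $p_2 \le 23$. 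The monotonicity of $x\mapsto \frac{x}{x-1}$ makes step (4) clean: replacing $p_3,p_4$ by the two smallest primes exceeding $p_2$ only enlarges the right-hand side, so if even that enlarged quantity is $<5/3$ for $p_2\ge 29$, we are done.

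I would expect the main obstacle to be purely bookkeeping: verifying that the numerical gap in step (4)–(5) actually closes at $p_2 = 23$ versus $p_2 = 29$, i.e. checking that
$$
\frac{3}{2}\cdot\frac{29}{28}\cdot\frac{31}{30}\cdot\frac{37}{36} < \frac{5}{3} \le \frac{3}{2}\cdot\frac{23}{22}\cdot\frac{29}{28}\cdot\frac{31}{30},
$$
so that the threshold lands exactly where the lemma claims. If the crude bound $d\le n/3$ is not quite enough to separate $23$ from $29$, the fallback is to use the sharper $\s(n)/n = f(a_1,a_2,a_3,a_4)\cdot\frac{p_1 p_2 p_3 p_4}{(p_1-1)(p_2-1)(p_3-1)(p_4-1)}$ together with $f<1$, or to invoke that $d$ being a divisor of $n=3^{a_1}\cdots$ forces $d/n$ to be of the form $1/D$ with $D$ a product of prime powers, so $D\in\{1,3,\dots\}$ and $d/n \in \{1, 1/3, \dots\}$; but $d/n=1$ is excluded ($d$ proper), so $d/n\le 1/3$ again, and if $p_2\ge 5$ one can even say more. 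I do not anticipate any conceptual difficulty beyond choosing the cleanest such refinement to make the constant work.
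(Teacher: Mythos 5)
Your argument is correct and is essentially the paper's own proof: the paper likewise assumes $p_2\geq 29$, bounds $\s(n)/n$ by $\frac{3\cdot 29\cdot 31\cdot 37}{2\cdot 28\cdot 30\cdot 36}\approx 1.65$, uses $d/n\leq 1/3$ (i.e.\ $D\geq 3$ since $D$ is an odd integer $>1$), and derives $2<2$. Your rearrangement into $5/3\leq\s(n)/n<1.65$ is the same inequality, and your check that the bound fails to exclude $p_2=23$ correctly explains why the threshold sits there (that case is handled by a separate lemma).
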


\begin{proof}
If $p_2\geq 29$, then we have $$2=\frac{\s(n)}{n}+\frac{d}{n}<\frac{3.29.31.37}{2.28.30.36}+\frac{1}{3}<2,$$ which is impossible. Hence $p_2\leq 23$.

\end{proof}

We shall now, look at various cases for $p_2$ in the following series of lemmas. The techniques are always similar, so for the sake of brevity we omit few details, but we will always specify how we can check them.

\begin{lemma}\label{p2-1}
If $n$ is an odd deficient perfect number of the form in Theorem \ref{th-1}, then $p_2\neq 23$.
\end{lemma}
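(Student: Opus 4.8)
The plan is to suppose $p_2 = 23$ and derive a contradiction via the inequality in \eqref{eq-3.2}. Since $p_1 = 3$ by Lemma \ref{p1}, and $p_2 = 23$, we have $p_3 > 23$ and $p_4 > p_3$. First I would get an upper bound on $\s(n)/n$ using the product over the four primes: $\s(n)/n < \tfrac{3}{2}\cdot\tfrac{23}{22}\cdot\tfrac{p_3}{p_3-1}\cdot\tfrac{p_4}{p_4-1}$. Combined with \eqref{eq-3.2}, namely $2 = \s(n)/n + 1/D$, and with the crude bound $1/D \ge 1/n$ being too weak, I instead use $1/D = d/n \ge$ something governed by the smallest deficient divisor; but the cleaner route is to note $d \mid n$ so $d \geq 1$ gives $1/D \le$ ... — actually the effective play here is: $\s(n)/n > 2 - 1$ trivially, so I need the reverse direction. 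The key point is that $2 = \s(n)/n + 1/D < \s(n)/n + 1/3$ would force $\s(n)/n > 5/3$, and then I push $\tfrac{3}{2}\cdot\tfrac{23}{22}\cdot\tfrac{p_3}{p_3-1}\cdot\tfrac{p_4}{p_4-1} > 5/3$ to bound $p_3$ from above: this gives only finitely many candidate values of $p_3$ (roughly $p_3 \le$ a small bound, since $\tfrac{3}{2}\cdot\tfrac{23}{22} \approx 1.568$ already, so $\tfrac{p_3}{p_3-1}\cdot\tfrac{p_4}{p_4-1}$ must exceed about $1.0635$, forcing $p_3$ reasonably small, say $p_3 \le 37$ or so).

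Next, for each surviving candidate prime $p_3 \in \{29, 31, 37, \dots\}$ I would repeat the squeeze to bound $p_4$, obtaining a finite list of quadruples $(3, 23, p_3, p_4)$. For each such quadruple I would then invoke the order-based argument that the paper has been setting up (the function $f(a_1,a_2,a_3,a_4)$ and $\ord_m(a)$): from \eqref{eq-main} one has $\s(n) = 2n - d$, and reducing modulo suitable primes (in particular, examining $\s(p_i^{a_i}) = 1 + p_i + \cdots + p_i^{a_i}$ and using that $a_i$ is even by Lemma \ref{lem-df}) forces divisibility conditions that the small prime $3$ cannot satisfy — typically one shows $3 \mid \s(n)$ is impossible, or that the $2$-adic / $3$-adic valuation of $2n - d$ clashes with that of $\s(n)$. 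Concretely, $\s(3^{a_1})$ with $a_1$ even is $\equiv 1 \pmod 3$ but its behavior mod the other primes, together with $2n - d \equiv -d \pmod 3$, pins down $b_1$ and then over-determines the system.

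The main obstacle I anticipate is not the first squeeze (that is routine arithmetic) but organizing the finite case check cleanly: there may be a moderately large number of pairs $(p_3, p_4)$ to eliminate, and for each one the elimination relies on a divisibility/order computation rather than a single clean inequality. I expect the paper handles this exactly as it announced — "the techniques are always similar, so for the sake of brevity we omit few details, but we will always specify how we can check them" — i.e. by reducing to a bounded search and citing that a routine (possibly computer-assisted) verification rules out every case. So my proof would: (i) use \eqref{eq-3.2} with $1/D < 1/3$ to bound $p_3$, (ii) re-apply it to bound $p_4$, (iii) for the finitely many quadruples, apply the congruence argument mod $3$ (and if needed mod $p_3$, $p_4$) using Lemma \ref{lem-df} to reach a contradiction in each case, and (iv) conclude $p_2 \neq 23$.
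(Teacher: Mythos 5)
Your overall skeleton matches the paper's: assume $p_2=23$, use $2=\s(n)/n+1/D$ with $1/D\le 1/3$ to bound $p_3$ (the paper gets $p_3\le 31$, so only $p_3\in\{29,31\}$ survive), then bound $p_4$ in each case and eliminate the finitely many remaining quadruples. The case $p_3=31$ in fact dies already at the inequality stage ($p_4\ge 37$ makes the right-hand side fall below $2$), so only $p_3=29$, $p_4\le 47$ requires the arithmetic elimination.

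The genuine gap is in your step (iii): you never pin down $D$, and the mod-$3$ argument you sketch would not close the case. The paper's decisive move is to show first, again by the same inequality but now with $1/D\le 1/9$, that $D\ge 9$ is impossible; hence $D=3$, i.e.\ $d=n/3$, so $b_1=a_1-1$ and $b_i=a_i$ otherwise, and \eqref{eq-main} becomes the exact identity $\s(n)=\tfrac{5}{3}n=5\cdot 3^{a_1-1}\cdot 23^{a_2}\cdot 29^{a_3}\cdot p_4^{a_4}$. The contradiction then comes from the factor $5$ on the right: $5\mid\s(p^{a})$ forces $\ord_5(p)\mid a+1$, but $\ord_5(3)=\ord_5(23)=\ord_5(37)=\ord_5(43)=\ord_5(47)=4$ and $\ord_5(29)=2$ are all even, while every $a_i+1$ is odd by Lemma \ref{lem-df}, so $5\nmid\s(n)$. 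Your proposed reduction mod $3$ cannot substitute for this: $\s(3^{a_1})\equiv 1\pmod 3$ tells you nothing decisive, since $\s(p^{a})$ for $p\equiv 1\pmod 3$ can well be divisible by $3$. Note also that the mod-$5$ argument fails precisely for $p_4=31$ and $p_4=41$ (these are $\equiv 1\pmod 5$, so their order is $1$, which is odd); the paper must treat those two values separately, using that all the orders of $3,23,29$ modulo $31$ (resp.\ modulo $41$) are even, so that $p_4\nmid\s(n)$ while $p_4$ divides the right-hand side. Without identifying $D=3$ and the resulting prime $5$ (and the two exceptional $p_4$ values), your plan does not actually terminate in a contradiction.
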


\begin{proof}
If $p_2=23$, then using similar methods like before, we can conclude that $p_3\leq 31$. This gives us two choices for $p_3$, namely $29,31$. We shall look into them separately.

\textit{Case 1.} $p_3=29$.

In this case, we can conclude that $p_4\leq 47$ using similar techniques.

Let $D\geq 9$, then we have $$2=\frac{\s(n)}{n}+\frac{1}{D}<\frac{3.23.29.31}{2.22.28.30}+\frac{1}{9}<2,$$ which is impossible. So, $D=3$ in this case, which means $a_1-b_1=1$ and $a_i=b_i$, $i=2,3,4$. Thus, 

\begin{equation}\label{ord-5}
    \s(3^{a_1}.23^{a_2}.29^{a_3}.p_4^{a_4})=5.3^{a_1-1}.23^{a_2}.29^{a_3}.p_4^{a_4}.
\end{equation}

\noindent We note that $\ord_5(3)=\ord_5(23)=\ord_5(37)=\ord_5(43)=\ord_5(47)=4$, $\ord_5(29)=2$ are all even; but $a_i\equiv 0 \pmod 2$, $i=1,2,3,4$, which means that $5$ does not divide the left hand side of equation \eqref{ord-5}, and this is a contradiction. Further if $p_4=31$, then $\ord_{31}(3)=30,\ord_{31}(23)=10,\ord_{31}(29)=10$ are all even and $\aimod$, so equation \eqref{ord-5} cannot hold. Again, if $p_4=41$, then $\ord_{41}(3)=8, \ord_{41}(23)=10, \ord_{41}(29)=40$ are all even and again equation \eqref{ord-5} cannot hold.

Hence, $p_3\neq 29$.

\textit{Case 2.} $p_3=31$.

Let $p_4\geq 37$, then we have $$2=\frac{\s(n)}{n}+\frac{1}{D}<\frac{3.23.31.37}{2.22.30.36}+\frac{1}{3}<2,$$ which is impossible. So, this case is not possible.

Combining the two cases together, we conclude that $p_2\neq 23$

\end{proof}

\begin{lemma}\label{p2-2}
If $n$ is an odd deficient perfect number of the form in Theorem \ref{th-1}, then $p_2\neq 19$.
\end{lemma}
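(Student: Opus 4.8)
The plan is to mimic the structure of the proof of Lemma \ref{p2-1} exactly. First I would assume $p_2 = 19$ and use the inequality $\frac{\s(n)}{n} + \frac{1}{D} = 2$ together with $D \geq 3$ (so $\frac{d}{n} \leq \frac13$) to bound $p_3$ from above: solving $\frac{3 \cdot 19 \cdot p_3 \cdot (p_3+\text{next})}{2 \cdot 18 \cdot (p_3-1)(\cdots)} + \frac13 < 2$ forces $p_3$ to lie in a short list, presumably $p_3 \in \{23, 29, 31, 37, \dots\}$ up to some explicit cutoff. I would then treat each admissible $p_3$ as a separate case.

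Within each case, I would again bound $p_4$ using the same $\frac{\s(n)}{n} < \frac{p_1 p_2 p_3 p_4}{(p_1-1)(p_2-1)(p_3-1)(p_4-1)}$ estimate, getting a finite list of candidate primes $p_4$. Then, for the cases that survive, I would split on the size of $D = p_1^{a_1-b_1} \cdots p_4^{a_4-b_4}$: if $D$ is large enough (e.g. $D \geq 9$, or whatever threshold makes $\frac{\s(n)}{n} + \frac1D < 2$), we get an immediate contradiction from the product estimate; otherwise $D = 3$, which forces $a_1 - b_1 = 1$ and $a_i = b_i$ for $i = 2,3,4$, giving an equation of the shape $\s(n) = 5 \cdot 3^{a_1-1} \cdot 19^{a_2} \cdot p_3^{a_3} \cdot p_4^{a_4}$ as in \eqref{ord-5}. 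The divisibility-by-$5$ obstruction then applies: since all $a_i$ are even (Lemma \ref{lem-df}) and $\s(p^a) = 1 + p + \cdots + p^a$ is divisible by $5$ only when $\ord_5(p) \mid (a+1)$ with $a+1$ odd, I would check that $\ord_5(p)$ is even for each prime $p$ appearing (for $p = 3$ it is $4$, etc.), so $5$ cannot divide $\s(p^{a})$ for any even $a$; hence $5 \nmid \s(n)$, a contradiction. For the residual sub-cases where $D$ could force a different prime factor on the right (e.g. $D$ involving $p_3$ or $p_4$), I would run the analogous order computation modulo that prime.

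The routine part is the (many) explicit order computations $\ord_q(p)$ and the product-inequality checks; these I would summarize rather than list in full, following the paper's stated convention of omitting details. The main obstacle I anticipate is that $p_2 = 19$ is "larger" than the cases handled later, so the inequalities are looser and the candidate lists for $p_3$ and $p_4$ will be longer than in Lemma \ref{p2-1}; there may be a stubborn sub-case where the $D \geq 9$ bound is not quite enough and one must either push to $D \geq$ some larger value (handling the intermediate $D = 3, 5, 7, \dots$ individually via order arguments modulo $5, 7, \dots$) or combine the order obstruction with a finer inequality. In particular, if some candidate tuple $(3, 19, p_3, p_4)$ has the property that $\ord_q(p)$ is even for all relevant small primes $q$, the modular argument closes it; I expect every surviving tuple to be killable this way, but verifying it is where the real work lies.
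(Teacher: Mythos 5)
Your reduction to $D=3$ and to the equation $\s(3^{a_1}19^{a_2}p_3^{a_3}p_4^{a_4})=5\cdot 3^{a_1-1}\cdot 19^{a_2}p_3^{a_3}p_4^{a_4}$ matches the paper, but your main closing device --- the divisibility-by-$5$ order argument --- cannot finish this lemma, and that is a genuine gap. The obstruction $5\nmid\s(p^a)$ for $a$ even requires $\ord_5(p)$ to be even; however the candidate lists for $p_3,p_4$ contain primes congruent to $1\pmod 5$ (e.g.\ $31$ and $41$), for which $\ord_5(p)=1$ and $\s(p^a)\equiv a+1\pmod 5$ can indeed vanish (take $a=4$). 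Your fallback of working modulo some other prime of the right-hand side also fails for concrete tuples: for $(3,19,23,31)$ one has $\ord_{31}(19)=15$, $\ord_{23}(3)=11$, $\ord_{19}(23)=9$, and $19\equiv 1\pmod 3$, so every available modulus $q\in\{3,5,19,23,31\}$ admits an odd order (or order $1$) among the bases, and no single-prime order computation yields a contradiction. You anticipate possibly needing ``a finer inequality'' but do not supply one, and that is precisely where the real content of this lemma lies.

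The paper's actual mechanism is different. It first forces $a_1\geq 6$ by observing that $\s(3^2)=13$ and $11\mid\s(3^4)=121$, while neither $11$ nor $13$ can divide the right-hand side $5\cdot 3^{a_1-1}\cdot 19^{a_2}p_3^{a_3}p_4^{a_4}$. It then exploits the exact identity $\ff=\g$, where $\ff=\prod_i\left(1-p_i^{-(a_i+1)}\right)$ and $\g=\frac{2^2\cdot 5\,(p_3-1)(p_4-1)}{19\,p_3p_4}$, and shows that for $23\leq p_3\leq 37$ the lower bound $\ff\geq(1-3^{-7})(1-19^{-3})(1-23^{-3})(1-29^{-3})>0.99927$ exceeds the upper bound $\g\leq\frac{2^2\cdot 5\cdot 36\cdot 40}{19\cdot 37\cdot 41}<0.99921$, a contradiction; the residual cases $p_3=41,43$ fall to the crude estimate $\s(n)/n+1/3<2$. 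To repair your proof you would need to import both the exponent-forcing step and this $f$-versus-$g$ comparison; the order computations alone do not suffice.
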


\begin{proof}

If $p_2=19$, then like before we can conclude that $p_3\leq 37$. This gives us the choices $23,29,31$ and $37$ for $p_3$. Using the elementary inequality $\dfrac{p}{p-1}>\dfrac{p+l}{p+l-1}$ for positive integers $p$ and $l$ we see that $D\geq 9$ cannot occur in this case, if $D\geq 9$ cannot occur when $p_3=23$. And indeed this is the case, since $$2=\frac{\s(n)}{n}+\frac{1}{D}<\frac{3.19.23.29}{2.18.22.28}+\frac{1}{9}<2,$$ is impossible. So, $D=3$ in all these cases, and analogous to equation \eqref{ord-5} we have the following 

\begin{equation}\label{ord-5-2}
        \s(3^{a_1}.19^{a_2}.p_3^{a_3}.p_4^{a_4})=5.3^{a_1-1}.19^{a_2}.p_3^{a_3}.p_4^{a_4}.
\end{equation}

Let us use the function $f$ defined earlier; which is this case is $$\ff=\left(1-\frac{1}{3^{a_1+1}}\right)\left(1-\frac{1}{19^{a_2+1}}\right)\left(1-\frac{1}{p_3^{a_3+1}}\right)\left(1-\frac{1}{p_4^{a_4+1}}\right).$$ We also introduce the following function $$\g=\frac{2^2.5.(p_3-1).(p_4-1)}{19.p_3.p_4}.$$

\noindent From equation \eqref{ord-5-2}, it is clear that in this case $$\ff=\g.$$

If $a_1=2$, then $13$ divides the left hand side of \eqref{ord-5-2}, but it does not divide the right hand side of equation \eqref{ord-5-2}, so this is a contradiction. Similarly, if $a_1=4$, then $11$ divides the left hand side of \eqref{ord-5-2}, but it does not divide the right hand side of equation \eqref{ord-5-2}, so this is a contradiction. So $a_1\geq 6$.

\textit{Case 1.} $23 \leq p_2 \leq 37$.

We have here, 

\begin{align*}
 \ff\geq \left(1-\frac{1}{3^{7}}\right)\left(1-\frac{1}{19^3}\right)\left(1-\frac{1}{23^{3}}\right)\left(1-\frac{1}{29^{3}}\right)&\\
 =0.999274\cdots ,
\end{align*}

\noindent and $$\g\leq \frac{2^2.5.36.40}{19.37.41}=0.999202\cdots .$$ Clearly, this is not possible.

Since, we want to check only inequalities of the form $\ff \geq Q$ and $\g\leq R$ and then compare the values of $Q$ and $R$, so we need to only verify for the smallest possible values of $p_i$'s for $\ff$ and the largest possible values of $p_i$'s for $\g$. So, the above verification need not be done for all sets of possible values of $p_i$'s. This observation will be used later without commentary.

\textit{Case 2.} $p_3=41$.

If $p_4\geq 43$, then we have $$2=\frac{\s(n)}{n}+\frac{d}{n}<\frac{3.19.41.43}{2.18.40.42}+\frac{1}{3}<2,$$ which is not possible.

\textit{Case 3.} $p_3=43$.

If $p_4\geq 47$, then we have $$2=\frac{\s(n)}{n}+\frac{d}{n}<\frac{3.19.43.47}{2.18.42.46}+\frac{1}{3}<2,$$ which is not possible.

\end{proof}

The proof of the following is very similar to Lemma \ref{p2-2}.

\begin{lemma}\label{p2-3}
If $n$ is an odd deficient perfect number of the form in Theorem \ref{th-1}, then $p_2\neq 17$.
\end{lemma}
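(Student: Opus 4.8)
The plan is to follow the template established in the proofs of Lemmas \ref{p2-1} and \ref{p2-2}, specializing to $p_2=17$. First I would use the inequality $2=\frac{\s(n)}{n}+\frac{1}{D}<\frac{3.17.p_3.p_4}{2.16.(p_3-1).(p_4-1)}+\frac13$ together with the elementary monotonicity $\frac{p}{p-1}>\frac{p+l}{p+l-1}$ to bound $p_3$; a short computation shows this forces $p_3\leq 43$ (so $p_3\in\{19,23,29,31,37,41,43\}$). Then, exactly as before, for the smaller values of $p_3$ (those for which $\frac{3.17.p_3.p_4}{2.16.(p_3-1).(p_4-1)}+\frac19<2$ already fails, which one checks holds first at $p_3=19$) the case $D\geq 9$ is impossible, so $D=3$, meaning $a_1-b_1=1$ and $a_i=b_i$ for $i=2,3,4$. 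This yields the analogue of \eqref{ord-5}: $\s(3^{a_1}.17^{a_2}.p_3^{a_3}.p_4^{a_4})=5.3^{a_1-1}.17^{a_2}.p_3^{a_3}.p_4^{a_4}$, and in particular $5\mid \s(n)$.

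Next I would rule out small $a_1$ by the order/divisibility argument: if $a_1=2$ then $1+3+9=13$ divides the left side but not the right; if $a_1=4$ then $1+3+\cdots+3^4=121=11^2$ forces $11\mid$ RHS, impossible. Hence $a_1\geq 6$. (One should double-check that no similar obstruction is needed from small $a_2$, i.e. that $1+17+17^2=307$ is prime and does not divide the RHS for the relevant $p_3,p_4$ — this is the kind of routine check the paper says it omits.) With $a_1\geq 6$ in hand, I would split into the same two regimes as in Lemma \ref{p2-2}. For the large values $p_3=41$ and $p_3=43$, the crude bound $\frac{3.17.p_3.p_4}{2.16.(p_3-1).(p_4-1)}+\frac13<2$ with $p_4$ the next prime eliminates them outright.

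For the remaining range $19\leq p_3\leq 37$ I would introduce, mirroring the paper, the functions
\[
\ff=\left(1-\frac{1}{3^{a_1+1}}\right)\left(1-\frac{1}{17^{a_2+1}}\right)\left(1-\frac{1}{p_3^{a_3+1}}\right)\left(1-\frac{1}{p_4^{a_4+1}}\right)
\]
and $\g=\dfrac{2^2.5.(p_3-1).(p_4-1)}{17.p_3.p_4}$, so that the relation $\s(n)=5.3^{a_1-1}.17^{a_2}.p_3^{a_3}.p_4^{a_4}$ becomes precisely $\ff=\g$. Since $a_1\geq 6$, all $a_i\geq 2$, and $p_4>p_3$, one has the lower bound $\ff\geq\bigl(1-3^{-7}\bigr)\bigl(1-17^{-3}\bigr)\bigl(1-19^{-3}\bigr)\bigl(1-23^{-3}\bigr)$ and the upper bound $\g\leq \dfrac{2^2.5.36.40}{17.37.41}$, where (following the paper's stated observation) it suffices to evaluate $\ff$ at the smallest admissible primes and $\g$ at the largest. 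A numerical comparison should give the lower bound on $\ff$ strictly exceeding the upper bound on $\g$, contradicting $\ff=\g$ and finishing the lemma.

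The main obstacle is not conceptual but arithmetic bookkeeping: one must make sure the bound on $p_3$ is tight enough that only finitely many cases survive, verify that $D\geq 9$ is genuinely excluded for \emph{every} surviving $p_3$ (which, via the monotonicity inequality, reduces to the single worst case $p_3=19$, $p_4$ smallest), and — the real sticking point — confirm that the $\ff\geq Q$ versus $\g\leq R$ numbers actually separate. Because $p_2=17$ is smaller than $19$, $\g$ is somewhat larger here than in Lemma \ref{p2-2}, so the margin $0.9992\ldots$ versus $0.9992\ldots$ is thin; if it fails to separate with $a_1\geq 6$ alone, one would need to push $a_1$ higher by finding another small prime dividing $1+3+\cdots+3^{a_1}$ for $a_1=6,8,\dots$ (e.g. $1+3+\cdots+3^6=1093$ is prime, $1+3+\cdots+3^8=3^9-1)/2 = 9841 = 13\cdot 757$, giving a $13$-divisibility obstruction), thereby sharpening the $3^{-(a_1+1)}$ term in $\ff$ until the inequality closes.
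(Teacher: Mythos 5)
Your overall architecture matches the paper's (bound $p_3$, force $D=3$, force $a_1\geq 6$ via $\s(3^2)=13$ and $\s(3^4)=11^2$, then compare $\ff$ against $\g$), but two concrete steps are wrong. First, your formula for $\g$ is incorrect. From $\s(n)=\tfrac{5n}{3}$ one gets $\ff=\tfrac{5}{3}\cdot\tfrac{(3-1)(17-1)(p_3-1)(p_4-1)}{3\cdot 17\cdot p_3\cdot p_4}$, i.e. $\g=\tfrac{2^5\cdot 5\cdot(p_3-1)(p_4-1)}{3^2\cdot 17\cdot p_3\cdot p_4}$; you transplanted the $p_2=19$ formula $\tfrac{2^2\cdot 5\cdot(p_3-1)(p_4-1)}{19\,p_3 p_4}$ and merely replaced $19$ by $17$, but the clean factor $2^2=\tfrac{2\cdot 18}{9}$ there becomes $\tfrac{2\cdot 16}{9}=\tfrac{32}{9}$ here. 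With your $\g$ the upper bound $\tfrac{2^2\cdot5\cdot36\cdot40}{17\cdot37\cdot41}\approx 1.117$ exceeds $1$, while $\ff<1$ always, so the comparison produces no contradiction at all; with the correct constant the paper gets $\g\leq\tfrac{2^5\cdot5\cdot40\cdot42}{3^2\cdot17\cdot41\cdot43}=0.9965\ldots<0.9991\ldots\leq\ff$ and the argument closes.

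Second, your claim that $p_3=41$ and $p_3=43$ are "eliminated outright" by the crude bound is false: $\tfrac{3\cdot17\cdot41\cdot43}{2\cdot16\cdot40\cdot42}+\tfrac13\approx 2.006>2$ and $\tfrac{3\cdot17\cdot43\cdot47}{2\cdot16\cdot42\cdot46}+\tfrac13\approx 2.0005>2$, so neither inequality yields a contradiction. The paper instead keeps $p_3=41$ inside the $\ff$-versus-$\g$ range ($19\leq p_3\leq 41$), and for $p_3=43$ it first shows $p_4\geq 53$ is impossible and then kills the surviving case $p_4=47$ by a divisibility argument you never invoke: $\ord_5(3)=\ord_5(17)=\ord_5(43)=\ord_5(47)=4$ are all even while every $a_i$ is even, so $5$ cannot divide $\s(n)$, contradicting $5\mid$ the right-hand side of the key equation. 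You need to restore both of these pieces (and, once $\g$ is corrected, re-justify the choice of "largest admissible $p_3,p_4$" in the upper bound for $\g$, since for small $p_3$ the crude inequality does not cap $p_4$ at $43$).
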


\begin{proof}

If $p_2=17$, then like before we have $p_3\leq 47$, so the choices of $p_3$ are $19,23,29,31,37,41,43$ and $47$. Nothing again the elementary inequality $\dfrac{p}{p-1}>\dfrac{p+l}{p+l-1}$ for positive integers $p$ and $l$ we see that $D\geq 9$ cannot occur in this case, if $D\geq 9$ cannot occur when $p_3=19$. And indeed this is the case, since $$2=\frac{\s(n)}{n}+\frac{1}{D}<\frac{3.17.19.23}{2.16.18.22}+\frac{1}{9}<2,$$ is impossible. So, $D=3$ in all these cases, and analogous to equation \eqref{ord-5} we have the following 

\begin{equation}\label{ord-5-3}
        \s(3^{a_1}.17^{a_2}.p_3^{a_3}.p_4^{a_4})=5.3^{a_1-1}.17^{a_2}.p_3^{a_3}.p_4^{a_4}.
\end{equation}

Let us use the function $f$ defined earlier; which is this case is $$\ff=\left(1-\frac{1}{3^{a_1+1}}\right)\left(1-\frac{1}{17^{a_2+1}}\right)\left(1-\frac{1}{p_3^{a_3+1}}\right)\left(1-\frac{1}{p_4^{a_4+1}}\right).$$ We also introduce the following function $$\g=\frac{2^5.5.(p_3-1).(p_4-1)}{3^2.17.p_3.p_4}.$$

\noindent From equation \eqref{ord-5-3}, it is clear that in this case $$\ff=\g.$$

If $a_1=2$, then $13$ divides the left hand side of \eqref{ord-5-3}, but it does not divide the right hand side of equation \eqref{ord-5-3}, so this is a contradiction. Similarly, if $a_1=4$, $11$ divides the left hand side of \eqref{ord-5-3}, but it does not divide the right hand side of equation \eqref{ord-5-3}, so this is a contradiction. So $a_1\geq 6$.

\textit{Case 1.} $19\leq p_3\leq 41$.

For this case, we have 

\begin{align*}
 \ff\geq \left(1-\frac{1}{3^{7}}\right)\left(1-\frac{1}{17^3}\right)\left(1-\frac{1}{19^{3}}\right)\left(1-\frac{1}{23^{3}}\right)&\\
 =0.999111\cdots ,
\end{align*}

\noindent and $$\g\leq \frac{2^5.5.40.42}{3^2.17.41.43}=0.996519\cdots .$$ Clearly, this is not possible, so this case cannot occur.

\textit{Case 2.} $p_3=43$.

If $p_4\geq 53$, then we have $$2=\frac{\s(n)}{n}+\frac{d}{n}<\frac{3.17.43.53}{2.16.42.52}+\frac{1}{3}<2,$$ which is not possible. So, $p_4=47$. However, we have $\ord_5(3)=\ord_5(17)=\ord_5(43)=\ord_5(47)=4$, hence $5$ cannot divide the left hand side of equation \eqref{ord-5-3}. Hence, this case is not possible.

\textit{Case 2.} $p_3=47$.

If $p_4\geq 53$, then we have $$2=\frac{\s(n)}{n}+\frac{d}{n}<\frac{3.17.47.53}{2.16.46.52}+\frac{1}{3}<2,$$ which is not possible. So, this case is impossible.

Combining the two cases above, we have $p_2\neq 17$.

\end{proof}

\begin{lemma}\label{p2-4}
If $n$ is an odd deficient perfect number of the form in Theorem \ref{th-1}, then $p_2\neq 13$.
\end{lemma}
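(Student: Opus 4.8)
The plan is to follow the same template used in Lemmas \ref{p2-2} and \ref{p2-3}, now specialized to $p_2=13$. First I would bound $p_3$ from above: using the inequality $\frac{\s(n)}{n}+\frac{d}{n}<2$ together with $\frac{\s(n)}{n}<\frac{3\cdot 13\cdot p_3\cdot p_4}{2\cdot 12\cdot(p_3-1)\cdot(p_4-1)}$ and $\frac{d}{n}\le\frac{1}{3}$, one gets a finite list of admissible $p_3$ (this should give roughly $17\le p_3\le 59$ or so, a modest number of primes). Next, using the elementary monotonicity $\frac{p}{p-1}>\frac{p+l}{p+l-1}$ exactly as in the earlier lemmas, I would reduce the question ``can $D\ge 9$?'' to its instance at the smallest $p_3$, and check the single inequality $\frac{3\cdot 13\cdot 17\cdot 19}{2\cdot 12\cdot 16\cdot 18}+\frac{1}{9}<2$; if that fails (it should), then $D=3$ throughout, so $a_1-b_1=1$, $a_i=b_i$ for $i=2,3,4$, and
\begin{equation}\label{ord-5-4}
\s(3^{a_1}\cdot 13^{a_2}\cdot p_3^{a_3}\cdot p_4^{a_4})=5\cdot 3^{a_1-1}\cdot 13^{a_2}\cdot p_3^{a_3}\cdot p_4^{a_4}.
\end{equation}

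From \eqref{ord-5-4} I would extract two kinds of constraints. First, the small-prime divisibility trick: since $\s(3^{a_1})=1+3+\cdots+3^{a_1}$, the value $a_1=2$ forces $13\mid \s(3^{a_1})$ and hence $13$ divides the left side but (checking the right side's prime factors) not the right side — a contradiction; likewise $a_1=4$ brings in $11$. Actually here one must be slightly careful because $13=p_2$ already appears on both sides, so for $a_1=2$ the right comparison is on the \emph{exponent} of $13$, or better one simply notes $\s(3^2)=13$ contributes an extra factor of $13$ while $a_2$ is even, or picks a different small prime; in any case the same style of argument rules out the first few even values of $a_1$, leaving $a_1\ge 6$ (or whatever small threshold the arithmetic actually yields). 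Second, I would set up
\[
\ff=\left(1-\frac{1}{3^{a_1+1}}\right)\left(1-\frac{1}{13^{a_2+1}}\right)\left(1-\frac{1}{p_3^{a_3+1}}\right)\left(1-\frac{1}{p_4^{a_4+1}}\right),\qquad \g=\frac{2^?\cdot 5\cdot(p_3-1)(p_4-1)}{3^?\cdot 13\cdot p_3\cdot p_4},
\]
reading off the exact powers of $2$ and $3$ in the numerator/denominator of $\g$ from \eqref{ord-5-4} (these come from $\s(3^{a_1})=(3^{a_1+1}-1)/2=5\cdot 3^{a_1-1}\cdot(\text{cofactor})$, so the cofactor's $2$-adic and other small-prime content is what enters), and note \eqref{ord-5-4} forces $\ff=\g$.

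Then for the bulk of the $p_3$ range I would argue by the squeeze: $\ff\ge\left(1-\frac{1}{3^{7}}\right)\left(1-\frac{1}{13^{3}}\right)\left(1-\frac{1}{p_3^{3}}\right)\left(1-\frac{1}{p_4^{3}}\right)$ evaluated at the smallest admissible $p_3,p_4$, while $\g\le\frac{2^?\cdot 5\cdot(p_3-1)(p_4-1)}{3^?\cdot 13\cdot p_3\cdot p_4}$ evaluated at the largest admissible $p_3,p_4$; if the lower bound for $\ff$ exceeds the upper bound for $\g$, that whole block of cases dies at once (as the excerpt remarks, only the extreme $p_i$ need checking). The remaining large values of $p_3$ — where the $\frac{d}{n}\le\frac13$ bound on $p_4$ leaves only one or two choices of $p_4$ — I would dispatch individually, either by a direct $\frac{\s(n)}{n}+\frac{d}{n}<2$ contradiction or, when that is too weak, by an order argument: pick a prime $q\in\{p_3,p_4\}$ (or $q=5$) and show $\ord_q(3),\ord_q(13),\ord_q(p_3),\ord_q(p_4)$ are all even, so that $\s$ of a product of even prime powers cannot be divisible by $q$, contradicting \eqref{ord-5-4}. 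The main obstacle I anticipate is purely bookkeeping: $p_2=13$ is smaller than the previously handled values, so the list of $(p_3,p_4)$ pairs is the longest so far, the $\g$ bound is looser, and the squeeze in ``Case 1'' may not close the entire range in one shot — I may need to split the $p_3$ range into two or three subintervals (refining the lower bound on $a_1$, or the bound $1-1/p_i^{a_i+1}\ge 1-1/p_i^{3}$, in each), and correspondingly hunt for the right auxiliary prime $q$ with all-even orders in each leftover case.
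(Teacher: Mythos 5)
Your plan reproduces the paper's argument almost exactly: the same bound on $p_3$ (the paper actually gets $17\le p_3\le 73$, but your method yields this), the same reduction to $D=3$ via the monotonicity trick tested at $p_3=17$, the same equation $\s(n)=5\cdot 3^{a_1-1}\cdot 13^{a_2}\cdot p_3^{a_3}\cdot p_4^{a_4}$, the same $\ff=\g$ squeeze with $\g=\frac{2^3\cdot 5\cdot(p_3-1)(p_4-1)}{3\cdot 13\cdot p_3\cdot p_4}$, and the same use of $11=\sqrt{\s(3^4)}$ to kill $a_1=4$. The one place where your proposal has a genuine gap is exactly the spot you half-noticed: the case $a_1=2$. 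The divisibility trick fails here because $\s(3^2)=13=p_2$ already divides the right-hand side, and neither of your suggested patches closes the hole. Comparing exponents of $13$ does not work: the right side carries $13^{a_2}$ with $a_2\ge 2$, while the left side carries $13^{1+v}$ where $v=v_{13}(\s(p_3^{a_3})\s(p_4^{a_4}))$, and $v$ can be positive even though $a_3,a_4$ are even --- e.g.\ $\ord_{13}(29)=3$ is odd, so $13\mid\s(29^{a_3})$ whenever $3\mid a_3+1$, as with $\s(29^2)=13\cdot 67$. So ``$a_2$ is even'' buys you nothing, and there is no obvious ``different small prime'' to substitute. Consequently you cannot conclude $a_1\ge 6$ up front, and your Case~1 squeeze, as written with the factor $\bigl(1-\tfrac{1}{3^{7}}\bigr)$, does not cover $a_1=2$.

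The paper resolves this with two devices you did not anticipate. For $17\le p_3\le 29$ it arranges the squeeze to need only $a_1\ge 2$, using the weaker factor $\bigl(1-\tfrac{1}{3^{3}}\bigr)$: one still gets $\ff\ge 0.9621\cdots$ against $\g\le\frac{2^3\cdot 5\cdot 28\cdot 30}{3\cdot 13\cdot 29\cdot 31}=0.9583\cdots$, so no lower bound on $a_1$ is needed there at all. For $p_3\ge 31$ it kills $a_1=2$ by a direct size estimate that exploits the exact value $\frac{\s(3^2)}{3^2}=\frac{13}{9}$ in place of the generic bound $\frac{3}{2}$: $\frac{13}{9}\cdot\frac{13}{12}\cdot\frac{31}{30}\cdot\frac{37}{36}+\frac{1}{3}<2$, whereas with $\frac{3}{2}$ the sum exceeds $2$ and gives nothing. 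Only after $a_1=2$ and $a_1=4$ are removed does the $\bigl(1-\tfrac{1}{3^{7}}\bigr)$ squeeze finish $p_3\ge 31$. Your toolbox contains both ingredients in principle (you mention direct inequalities and refining the $a_1$ threshold per subinterval), but as stated the proposal asserts $a_1\ge 6$ on the strength of an argument that is invalid for $a_1=2$, so this step needs to be replaced by the two-pronged treatment above.
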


\begin{proof}
If $p_2=13$ and $p_3\geq 79$ then we have $$2=\frac{\s(n)}{n}+\frac{d}{n}<\frac{3.13.79.83}{2.12.78.82}+\frac{1}{3}<2,$$ which is impossible. So, $p_3\leq 73$. 

Therefore, the choices of $p_3$ lies in the set $$\{17, 19, 23, 29, 31, 37, 41, 43, 47, 53, 59, 61, 67, 71,73\}.$$ Nothing again the elementary inequality $\dfrac{p}{p-1}>\dfrac{p+l}{p+l-1}$ for positive integers $p$ and $l$ we see that $D\geq 9$ cannot occur in this case, if $D\geq 9$ cannot occur when $p_3=17$. And indeed this is the case, since $$2=\frac{\s(n)}{n}+\frac{1}{D}<\frac{3.13.17.19}{2.12.18.18}+\frac{1}{9}<2,$$ is impossible. So, $D=3$ in all these cases, and analogous to equation \eqref{ord-5} we have the following 

\begin{equation}\label{ord-5-4}
        \s(3^{a_1}.13^{a_2}.p_3^{a_3}.p_4^{a_4})=5.3^{a_1-1}.13^{a_2}.p_3^{a_3}.p_4^{a_4}.
\end{equation}

Let us use the function $f$ defined earlier; which is this case is $$\ff=\left(1-\frac{1}{3^{a_1+1}}\right)\left(1-\frac{1}{13^{a_2+1}}\right)\left(1-\frac{1}{p_3^{a_3+1}}\right)\left(1-\frac{1}{p_4^{a_4+1}}\right).$$ We also introduce the following function $$\g=\frac{2^3.5.(p_3-1).(p_4-1)}{3.13.p_3.p_4}.$$

\noindent From equation \eqref{ord-5-4}, it is clear that in this case $$\ff=\g.$$

\textit{Case 1.} $17\leq p_3\leq 29$.

We note that

\begin{align*}
 \ff\geq \left(1-\frac{1}{3^{3}}\right)\left(1-\frac{1}{13^{3}}\right)\left(1-\frac{1}{17^{3}}\right)\left(1-\frac{1}{19^{3}}\right)&\\
 =0.962188\cdots ,
\end{align*}
\noindent and  $$\g \leq \frac{2^3.5.28.30}{3.13.29.31}=0.95833\cdots,$$ which is not possible. So, this case is not possible.

\textit{Case 2.} $p_3\geq 31$

If $a_1=2$ and $p_3\geq 31$, then we have $$2=\frac{\s(n)}{n}+\frac{d}{n}<\frac{\s(3^2).13.31.37}{3^2.12.30.36}+\frac{1}{3}<2,$$ which is not possible.

If $a_1=4$, then $11$ divides the left hand side of equation \eqref{ord-5-4}, but not the right hand side. So, this is not possible. 

Let $a_1\geq 6$. Then we have 

\begin{align*}
 \ff\geq \left(1-\frac{1}{3^{7}}\right)\left(1-\frac{1}{13^{3}}\right)\left(1-\frac{1}{31^3}\right)\left(1-\frac{1}{37^3}\right)&\\
 =0.999035 \cdots ,
\end{align*}

\noindent and  $$\g \leq \frac{2^3.5.72.78}{3.13.73.79}=0.998786\cdots ,$$ which is not possible. So, this case is not possible.

Combining the two cases above, we conclude that $p_2\neq 13$.

\end{proof}

\begin{lemma}\label{p2-5}
If $n$ is an odd deficient perfect number of the form in Theorem \ref{th-1}, then $p_2\neq 11$.
\end{lemma}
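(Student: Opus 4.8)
The proof will run along the lines of Lemmas~\ref{p2-1}--\ref{p2-4}, but the crude estimates are weaker for $p_2=11$, so noticeably more cases survive them and must be settled one at a time. Assume $p_2=11$; by Lemma~\ref{p1} we have $p_1=3$, so $n=3^{a_1}\cdot 11^{a_2}\cdot p_3^{a_3}\cdot p_4^{a_4}$ with every $a_i$ even. The first step is to bound $p_3$: from \eqref{eq-3.2} and $\frac{1}{D}\le\frac{1}{3}$ one obtains $\frac{3\cdot 11\cdot p_3\cdot p_4}{2\cdot 10\cdot(p_3-1)(p_4-1)}+\frac{1}{3}>2$, i.e.\ $\frac{p_3}{p_3-1}\cdot\frac{p_4}{p_4-1}>\frac{100}{99}$, which keeps $p_3$ below $200$ and also bounds $p_4$ once $p_3$ is large. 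For $p_3\ge 17$ the sharper inequality $\frac{\s(n)}{n}+\frac{1}{D}<\frac{3\cdot 11\cdot 17\cdot 19}{2\cdot 10\cdot 16\cdot 18}+\frac{1}{9}<2$ forces $D=3$; only $p_3=13$ leaves room for a larger $D$, and then only $D=9$ (with $p_4=17$) survives, a stray possibility I would close off with the same kind of estimate after noting that $\s(13^2)=183=3\cdot 61$ and $\s(17^2)=307$ force $a_3,a_4\ge 4$.

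Once $D=3$, equation \eqref{eq-main} becomes
\begin{equation*}
\s\big(3^{a_1}\cdot 11^{a_2}\cdot p_3^{a_3}\cdot p_4^{a_4}\big)=5\cdot 3^{a_1-1}\cdot 11^{a_2}\cdot p_3^{a_3}\cdot p_4^{a_4},
\end{equation*}
so the only primes dividing $\s(n)=\prod_i\s(p_i^{a_i})$ are $3,5,11,p_3,p_4$; equivalently, with the function $f$ of the earlier lemmas, $\ff=\g$ for $\g=\dfrac{2^2\cdot 5^2\cdot(p_3-1)(p_4-1)}{3^2\cdot 11\cdot p_3\cdot p_4}$. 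Each admissible pair $(p_3,p_4)$ would then be eliminated by one of the three devices used before: (i) an order obstruction, typically at $q=5$, using $\ord_5(3)=4$ and the fact that $\ord_5(p)$ is even whenever $p\not\equiv 1\pmod 5$, so that $5\nmid\s(p_i^{a_i})$ for every $i$ and hence $5\nmid\s(n)$, contradicting the factor $5$ on the right; (ii) pushing $a_1$ up (e.g.\ $\s(3^2)=13$ shows $a_1=2$ forces $p_3=13$) and then comparing numerical bounds $\ff\ge Q>R\ge\g$; (iii) the tail inequality $\frac{\s(n)}{n}+\frac{1}{D}<2$ with $\frac{1}{D}\le\frac{1}{3}$ for the finitely many remaining pairs with $p_4$ large.

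Two features make this case longer than Lemmas~\ref{p2-1}--\ref{p2-4}, and I expect most of the work to be here. First, $11\equiv 1\pmod 5$, so $\s(11^{a_2})\equiv a_2+1\pmod 5$ and the clean ``all relevant orders mod $5$ are even'' argument fails precisely when $a_2\equiv 4\pmod 5$; that sub-case genuinely has $5\mid\s(11^{a_2})$ and must be excluded separately, for instance by a refinement modulo $25$ (with $\ord_{25}(11)=5$), by the size estimates, or by bringing in yet another prime dividing $\s(11^{a_2})$ --- here one can also exploit that the exponent $a_2$ is itself heavily restricted, since $\s(11^2)=7\cdot 19$ forces $a_2\ne 2$ and $\s(11^4)=5\cdot 3221$ (with $3221$ prime) forces $p_4=3221$ if $a_2=4$. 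Second, the device that disposed of $a_1=4$ in the earlier lemmas --- that $\s(3^4)=11^2$ introduces a prime not otherwise present --- is useless here because $11=p_2$, so $a_1=4$ must instead be ruled out through the $\ff=\g$ comparison, which only becomes effective after the list of admissible $(p_3,p_4)$ has been trimmed. Carrying out the resulting (numerous but individually routine) verifications yields $p_2\ne 11$.
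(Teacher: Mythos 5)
Your framework is exactly the paper's: bound $p_3\le 197$, force $D=3$ for $p_3\ge 17$ (with the single leftover $p_3=13$, $D=9$, $p_4=17$), pass to $\s(n)=5\cdot3^{a_1-1}\cdot11^{a_2}\cdot p_3^{a_3}\cdot p_4^{a_4}$, and eliminate pairs $(p_3,p_4)$ by comparing $\ff\ge Q$ against $\g\le R$ after pushing exponents up via divisibility of $\s(3^{a_1})$ and $\s(11^{a_2})$. Several of your specific observations are correct and genuinely useful: that $7\mid\s(11^2)$ rules out $a_2=2$ unconditionally (cleaner than the paper, which uses the factor $19$ and hence only in the subcases with $p_3\ge 127$); that the mod-$5$ order obstruction is unavailable here because $11\equiv 1\pmod 5$ (the paper indeed never uses it in this lemma's main case); and that $\s(3^4)=11^2$ no longer yields a foreign prime, so $a_1=4$ must be excluded by size estimates where needed. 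Your treatment of the $D=9$ stray case (forcing $a_3,a_4\ge4$ from $\s(13^2)=3\cdot61$ and $\s(17^2)=307$, then a size comparison) differs from the paper's order argument modulo $17$ but does close that subcase.

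The gap is that the sentence ``each admissible pair $(p_3,p_4)$ would then be eliminated by one of the three devices'' is asserted rather than demonstrated, and that assertion is the entire content of the lemma: the devices as you state them do not suffice uniformly, and the escalations needed are case-specific and not predictable in advance. Concretely, for $17\le p_3\le 127$ the bound $a_1\ge4$ already makes $\ff>\g$; for $139\le p_3\le 181$ one must first rule out $a_1=4$ by a size inequality and also invoke $a_2\ge4$; and for $p_3\in\{191,193,197\}$ even $a_1\ge6$ is not enough --- one needs the arithmetic fact that $\s(3^6)=1093$ is prime and exceeds every admissible $p_4$, giving $a_1\ge8$ before the comparison closes (the paper handles $(p_3,p_4)=(197,199)$ by a separate order argument showing no prime of the right-hand side can divide $\s(197^{a_3})$). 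None of this contradicts your plan, but without carrying out the roughly half-dozen subcases --- each requiring its own combination of exponent-forcing and numerical check --- the proposal is a correct strategy rather than a proof.
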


\begin{proof}
If $p_2=11$ and $p_3\geq 199$ then we have $$2=\frac{\s(n)}{n}+\frac{d}{n}<\frac{3.11.199.211}{2.10.198.210}+\frac{1}{3}<2,$$ which is impossible. So, $p_3\leq 197$.

\textit{Case 1.} $p_3\geq 17$.

Nothing again the elementary inequality $\dfrac{p}{p-1}>\dfrac{p+l}{p+l-1}$ for positive integers $p$ and $l$ we see that $D\geq 9$ cannot occur in this case, if $D\geq 9$ cannot occur when $p_3=17$. And indeed this is the case, since $$2=\frac{\s(n)}{n}+\frac{1}{D}<\frac{3.11.17.19}{2.10.18.18}+\frac{1}{9}<2,$$ is impossible. So, $D=3$ in all these cases, and analogous to equation \eqref{ord-5} we have the following 

\begin{equation}\label{ord-5-5}
        \s(3^{a_1}.11^{a_2}.p_3^{a_3}.p_4^{a_4})=5.3^{a_1-1}.11^{a_2}.p_3^{a_3}.p_4^{a_4}.
\end{equation}

We note here that, if $a_1=2$, then $13$ divides the left hand side of equation \eqref{ord-5-5}, but not the right hand side. So, $a_1\geq 4$.

\textit{Subcase 1.1.} $p_3\leq 127$.

Let us use the function $f$ defined earlier; which is this case is $$\ff=\left(1-\frac{1}{3^{a_1+1}}\right)\left(1-\frac{1}{11^{a_2+1}}\right)\left(1-\frac{1}{p_3^{a_3+1}}\right)\left(1-\frac{1}{p_4^{a_4+1}}\right).$$ We also introduce the following function $$\g=\frac{2^2.5^2.(p_3-1).(p_4-1)}{3^2.11.p_3.p_4}.$$

\noindent From equation \eqref{ord-5-5}, it is clear that in this case $$\ff=\g.$$

If $p_3\leq 127$, then we have 

\begin{align*}
 \ff\geq \left(1-\frac{1}{3^{5}}\right)\left(1-\frac{1}{11^{3}}\right)\left(1-\frac{1}{17^{3}}\right)\left(1-\frac{1}{19^{3}}\right)&\\
 =994789\cdots ,
\end{align*}

\noindent and $$\g\leq \frac{2^2.5^2.126.130}{3^2.11.127.131}=0.994497\cdots ;$$ which are incompatible with each other. Hence, this subcase cannot occur.

\textit{Subcase 1.2.} $127\leq p_3\leq 137$.

If $a_2=2$, then we find that $19$ divides the left hand side of equation \eqref{ord-5-5}, but not the right hand side. So, $a_2\geq 4$.

We have 

\begin{align*}
 \ff\geq \left(1-\frac{1}{3^{5}}\right)\left(1-\frac{1}{11^{5}}\right)\left(1-\frac{1}{127^{3}}\right)\left(1-\frac{1}{131^{3}}\right)&\\
 =0.995878\cdots ,
\end{align*}

\noindent and $$\g\leq \frac{2^2.5^2.136.138}{3^2.11.137.139}=0.995514\cdots ;$$ which are incompatible with each other. Hence, this case cannot occur.

\textit{Subcase 1.3.} $139\leq p_3\leq 181$.

If $a_1=4$, then we have $$n=\frac{\s(n)}{n}+\frac{d}{n}<\frac{\s(3^4).11.139.149}{3^4.10.138.148}+\frac{1}{3}<2,$$ which is not possible. So, $a_1\geq 6$ in this case. If $a_2=2$, then we find that $19$ divides the left hand side of equation \eqref{ord-5-5}, but not the right hand side. So, $a_2\geq 4$.

We have 

\begin{align*}
 \ff\geq \left(1-\frac{1}{3^{7}}\right)\left(1-\frac{1}{11^{5}}\right)\left(1-\frac{1}{139^{3}}\right)\left(1-\frac{1}{149^{3}}\right)&\\
 =0.999536\cdots ,
\end{align*}

\noindent and $$\g\leq \frac{2^2.5^2.180.190}{3^2.11.181.191}=0.999261\cdots ;$$ which are incompatible with each other. Hence, this case cannot occur.

\textit{Subcase 1.4.} $p_3=191$ or $193$.

If $p_4\geq 211$ then we have $$n=\frac{\s(n)}{n}+\frac{d}{n}<\frac{3.11.191.211}{2.10.190.210}+\frac{1}{3}<2,$$ which is not possible. So, $p_4\leq 197$.

If $a_1=4$, then we have $$n=\frac{\s(n)}{n}+\frac{d}{n}<\frac{\s(3^4).11.191.193}{3^4.10.190.192}+\frac{1}{3}<2,$$ which is not possible. So, $a_1\geq 6$ in this case.

If $a_1=6$, then $1093$ divides both sides of equation \eqref{ord-5-5}, which means $p_4=1093$, which is impossible. So, $a_1\geq 8$.

If $a_2=2$, then we find that $19$ divides the left hand side of equation \eqref{ord-5-5}, but not the right hand side. So, $a_2\geq 4$.

We have 

\begin{align*}
 \ff\geq \left(1-\frac{1}{3^{9}}\right)\left(1-\frac{1}{11^{5}}\right)\left(1-\frac{1}{191^{3}}\right)\left(1-\frac{1}{193^{3}}\right)&\\
 =0.999943\cdots ,
\end{align*}

\noindent and $$\g\leq \frac{2^2.5^2.192.196}{3^2.11.193.197}=0.999766\cdots ;$$ which are incompatible with each other. Hence, this case cannot occur.

\textit{Subcase 1.5.} $p_3=197.$

If $p_4\geq 211$ then we have $$n=\frac{\s(n)}{n}+\frac{d}{n}<\frac{3.11.197.211}{2.10.196.210}+\frac{1}{3}<2,$$ which is not possible. So, $p_4=199$.

We have $\ord_3(197)=\ord_{11}(197)=2$,$\ord_5(197)=4$ and $\ord_{199}(197)=198$ are all even. Hence, none of the factors of the left hand side of equation \eqref{ord-5-5} divides $\s(197^{a_3})$, which is a contradiction.

Combining the five subcases, we conclude that $p_3<17$.

\textit{Case 2.} $p_3=13$.

Nothing again the elementary inequality $\dfrac{p}{p-1}>\dfrac{p+l}{p+l-1}$ for positive integers $p$ and $l$ we see that $D\geq 13$ cannot occur in this case, if $D\geq 11$ cannot occur when $p_4=17$. And indeed this is the case, since $$2=\frac{\s(n)}{n}+\frac{1}{D}<\frac{3.11.13.17}{2.10.11.16}+\frac{1}{11}<2,$$ is impossible. So, $D=3$ or $9$ in all these cases.

\textit{Subcase 2.1.} $D=3$.

We have the following equation in this case

\begin{equation}\label{ord-5-6}
        \s(3^{a_1}.11^{a_2}.13^{a_3}.p_4^{a_4})=5.3^{a_1-1}.11^{a_2}.13^{a_3}.p_4^{a_4}.
\end{equation}

Let us use the function $f$ defined earlier; which is this case is $$\ff=\left(1-\frac{1}{3^{a_1+1}}\right)\left(1-\frac{1}{11^{a_2+1}}\right)\left(1-\frac{1}{13^{a_3+1}}\right)\left(1-\frac{1}{p_4^{a_4+1}}\right).$$ We also introduce the following function $$\g=\frac{2^4.5^2.(p_4-1)}{3.11.13.p_4}.$$ From equation \eqref{ord-5-6}, it is clear that $$\ff=\g.$$

Clearly 
\begin{align*}
 \ff\geq \left(1-\frac{1}{3^{3}}\right)\left(1-\frac{1}{11^{3}}\right)\left(1-\frac{1}{13^{3}}\right)\left(1-\frac{1}{17^{3}}\right)&\\
 =0.961606\cdots ,
\end{align*}

\noindent and $$\g\leq \frac{2^4.5^2}{3.11.13}=0.932401\cdots ;$$ which are incompatible with each other. Hence, we get a contradiction.

\textit{Subcase 2.2.} $D=9$.

In this case, if $p_4\geq 19$, then we have $$2=\frac{\s(n)}{n}+\frac{d}{n}<\frac{3.11.13.19}{2.10.12.18}+\frac{1}{9}<2,$$ which is not possible. So, $p_4=17$. Observing that, $\ord_{17}(3)=16=\ord_{17}(11)$ and $\ord_{17}(13)=4$, we can conclude that this case cannot occur.

Combining the two subcases we conclude that $p_3\neq 13$.

Combining the two cases above, we conclude that $p_2\neq 11$.

\end{proof}

\begin{lemma}\label{p2-f}
If $n$ is an odd deficient perfect number of the form in Theorem \ref{th-1}, then $5\leq p_2\leq 7$.
\end{lemma}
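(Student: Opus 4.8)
The plan is immediate, since this lemma is simply the conjunction of the case analyses already carried out. By Lemma~\ref{p1} we have $p_1=3$, so $p_2$ is an odd prime with $p_2>3$; the smallest admissible value is therefore $p_2=5$. By Lemma~\ref{p2} we have $p_2\le 23$, so the remaining possibilities are exactly $p_2\in\{5,7,11,13,17,19,23\}$. Lemmas~\ref{p2-1}, \ref{p2-2}, \ref{p2-3}, \ref{p2-4}, and \ref{p2-5} rule out the values $23$, $19$, $17$, $13$, and $11$ in turn, so what survives is precisely $p_2\in\{5,7\}$, which is the claim $5\le p_2\le 7$.

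The only point worth making explicit is the exhaustiveness of the list: because $p_1<p_2$ and $p_1=3$, and because Lemma~\ref{p2} supplies the upper cutoff $p_2\le 23$, the primes that could possibly occur as $p_2$ are exactly $5,7,11,13,17,19,23$; the lemmas above treat one per excluded value, so every case is covered. No new argument is needed here beyond assembling these references.

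Accordingly there is no genuine obstacle at this stage — all of the real work was done in Lemmas~\ref{p2-1}--\ref{p2-5}, namely the $\s(n)/n$ inequalities that force $D$ to be small (typically $D=3$), the order computations $\ord_q(\cdot)$ showing $5$ cannot divide $\s(n)$, and the comparisons of the form $f=g$ against sharp numerical bounds. Among these, Lemma~\ref{p2-5} (the case $p_2=11$) is the most delicate, since there $p_3$ may be as large as $197$, forcing the longest subdivision into subcases; the present lemma inherits all of that and simply records the outcome. One could note in passing that the same machinery, applied once more to the surviving cases $p_2\in\{5,7\}$, is what would be required to bound $p_3$ and push the classification further, but that is beyond the scope of this statement.
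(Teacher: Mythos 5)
Your proposal is correct and matches the paper's proof, which likewise just collects Lemmas~\ref{p2}, \ref{p2-1}, \ref{p2-2}, \ref{p2-3}, \ref{p2-4} and \ref{p2-5}. Your explicit remark that $p_1=3$ and $p_1<p_2$ give the lower bound $p_2\ge 5$ is a small but welcome addition the paper leaves implicit.
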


\begin{proof}
Collecting Lemmas \ref{p2}, \ref{p2-1}, \ref{p2-2}, \ref{p2-3}, \ref{p2-4} and \ref{p2-5} gives us the result.
\end{proof}

\begin{proof}[Proof of Theorem \ref{th-1}]

The first part is proved in Lemma \ref{p1}, while the second part in proved in Lemma \ref{p2-f}.

\end{proof}

\section{Other Results and Open Problems}\label{open}

Numerical evidence as quoted in Section \ref{intro} encouraged us to make the following conjectures.

\begin{conj}\label{conj-1}
There is only one odd deficient perfect number with four distinct prime factors.
\end{conj}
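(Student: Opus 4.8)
A plausible route to Conjecture~\ref{conj-1} is to carry the method of Section~\ref{proof} through the two cases $p_2=5$ and $p_2=7$ that Theorem~\ref{th-1} leaves open; the conclusion must be that $3^2.7^2.11^2.13^2=9018009$ (for which $d=3^2.7.13$ and $D=7.11^2.13$) is the unique survivor. A convenient starting point is that, since all the $a_i$ are even by Lemma~\ref{lem-df}, one has $\dfrac{\s(n)}{n}\ge\prod_i\bigl(1+\tfrac1{p_i}+\tfrac1{p_i^2}\bigr)$, so any prime tuple for which the right-hand side is already $\ge2$ is impossible at once; this removes whole families, for example $(p_1,p_2,p_3,p_4)=(3,5,7,p_4)$.

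For the surviving tuples the central object is the identity $\dfrac{\s(n)}{n}=\ff\cdot P=2-\dfrac1D$, where $P=\dfrac{p_1}{p_1-1}\cdot\dfrac{p_2}{p_2-1}\cdot\dfrac{p_3}{p_3-1}\cdot\dfrac{p_4}{p_4-1}$ and $0<\ff<1$. When $P>2$ (equality is impossible, $\prod p_i$ being odd and $2\prod(p_i-1)$ even), the deficiency condition $\s(n)<2n$ forces $\ff<2/P<1$, and $1-\ff<\sum_i p_i^{-(a_i+1)}$ pins one of the $a_i$ to a small value; substituting it lowers the effective upper bound for $\s(n)/n$, and a few such steps bring one to a situation in which $\dfrac1D=2-\dfrac{\s(n)}{n}$ is bounded below by a positive constant, so that $D$ lies in a finite set — the same being immediate when $P<2$. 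For each admissible $D$ the relation $\s(n)=(2D-1)d$ is an exact equality $\ff=\g$, with $\g$ a fixed rational of the type met in \eqref{ord-5-2}--\eqref{ord-5-6}; combining it with the lower bound $\s(n)/n\ge\prod_i(1+p_i^{-1}+p_i^{-2})$ and with $\s(p^a)/p^a>1$ forces $p_4$, then $p_3$, then the $a_i$, into finite ranges, after which the arithmetic of $\ff=\g$ — matching prime-power factors on its two sides, and comparing $\ord_q(p_i)$ with the parities of the $a_i$, exactly as in Section~\ref{proof} — eliminates each remaining configuration unless it is $9018009$.

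The obstacle is sheer volume, and it is worst for $p_2=5$. When $p_2\in\{5,7\}$ one has $\tfrac32\cdot\tfrac{p_2}{p_2-1}>\tfrac53$, so the clean contradiction $2=\dfrac{\s(n)}{n}+\dfrac dn<P+\dfrac13$ that shortens Lemmas~\ref{p2-1}--\ref{p2-5} is never available, and the elaborate bookkeeping above must be run for a great many prime tuples rather than a handful. When $P$ is extremely close to $2$ — for instance $(3,5,17,271)$ gives $P\approx1.9996$ — the bound on $D$, though finite, runs into the hundreds, so the number of pairs (admissible $D$, exponent pattern) is enormous and a by-hand verification is almost certainly infeasible. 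Ruling out $p_2=5$ entirely, and establishing for $p_2=7$ that $9018009$ is the only solution and not merely the smallest, is where we expect the genuine difficulty to lie; some further leverage — a sharper lower estimate for $\s(n)/n$ exploiting the evenness of the $a_i$ more fully, or a congruence obstruction modulo a small fixed prime tailored to the borderline tuples — may well be required to keep the analysis within reach.
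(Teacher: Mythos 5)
There is a genuine gap here: the statement you are addressing is stated in the paper as a \emph{conjecture}, and the paper itself offers no proof of it --- Theorem~\ref{th-1} only narrows the search to $p_1=3$ and $p_2\in\{5,7\}$, the Note Added records that the case $7\mid n$ is settled only in a separate follow-up paper, and the case $p_2=5$ is left entirely open. Your text is likewise not a proof but a programme, and you say so yourself (``a by-hand verification is almost certainly infeasible'', ``some further leverage \dots may well be required''). None of the eliminations for $p_2=5$ or $p_2=7$ are actually carried out, so nothing is established beyond what Theorem~\ref{th-1} already gives.

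Beyond the global incompleteness, the one load-bearing claim in your outline --- that ``a few such steps bring one to a situation in which $\frac1D=2-\frac{\s(n)}{n}$ is bounded below by a positive constant, so that $D$ lies in a finite set'' --- is not justified and is precisely where the difficulty sits. When $P>2$ the inequality $1-\ff\le\sum_i p_i^{-(a_i+1)}$ pins only \emph{one} exponent to a small value, and for fixed primes the set of exponent tuples with $\s(n)/n<2$ can still be infinite, so finiteness of the admissible $D$ does not follow from the displayed estimates. Moreover your quantitative expectation is off by orders of magnitude: for the known solution $9018009=3^2.7^2.11^2.13^2$ one has $D=7.11^2.13=11011$, so any correct bound on $D$ in the $p_2=7$ case must admit values in the tens of thousands, not ``the hundreds''; the paper's own device of contradicting $D\ge9$ via $\frac{\s(n)}{n}+\frac19<2$ is unavailable exactly because $P$ exceeds $2$ here. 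The parts of your sketch that are verifiable are fine --- $\frac{\s(n)}{n}\ge\prod_i\bigl(1+\frac1{p_i}+\frac1{p_i^2}\bigr)$ does follow from Lemma~\ref{lem-df} and does kill $(3,5,7,p_4)$, and $P\ne2$ by parity --- but they do not close, or even substantially advance, the conjecture.
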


\begin{conj}\label{conj-2}
For any positive integer $k\geq3$, there are only finitely many odd deficient perfect numbers with exactly $k$ distinct prime factors.
\end{conj}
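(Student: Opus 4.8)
The plan is to reduce Conjecture \ref{conj-2} to a Dickson-type finiteness statement for integers with a bounded number of prime factors. The first step is a reformulation that is implicit throughout this paper: setting $d = 2n - \s(n)$ and $m = n/d$, an odd integer $n > 1$ is deficient perfect precisely when $(2n - \s(n)) \mid n$, and in that case $\s(n)/n = 2 - 1/m = (2m-1)/m$ with $m \mid n$ and $m \ge 3$ (the last inequality because $d \le n/3$ for odd $n > 1$). Since $m \mid n$ we have $\omega(m) \le \omega(n)$, and since $\s(m)/m \le \s(n)/n < 2$ the number $m$ is itself deficient. The cases $k \le 3$ of the conjecture are already known: Tang and Feng \cite{df} show that no odd deficient perfect number has exactly three distinct prime factors, and \cite{collo} treats $k = 2$; so the content lies in the range $k \ge 4$, and Theorem \ref{th-1} is the opening move of the case $k = 4$.

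For general $k$ I would argue by induction on $k = \omega(n)$, modelled on Dickson's classical proof that there are only finitely many odd perfect numbers with a prescribed number of prime factors. Write $n = \prod_{i=1}^k p_i^{a_i}$ with $p_1 < \cdots < p_k$. The abundancy identity $\s(n)/n = \prod_i \s(p_i^{a_i})/p_i^{a_i}$ together with $5/3 \le \s(n)/n < \prod_i p_i/(p_i-1)$ forces the smallest primes $p_1, p_2, \dots$ to lie below explicit bounds depending only on $k$ — this is exactly the mechanism of Lemmas \ref{p1}--\ref{p2-5}, now to be run uniformly. One then isolates the largest prime $p_k$: from $\s(n) = d(2m-1)$ and multiplicativity, $\s(p_k^{a_k}) \mid d(2m-1)$, while $\s(p_k^{a_k}) \equiv 1 \pmod{p_k}$, so every prime factor of $\s(p_k^{a_k})$ lies in $\{p_1, \dots, p_{k-1}\}$ together with the prime factors of $2m-1$. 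If $m$ (hence $2m-1$) could be bounded in terms of $k$, then $\s(p_k^{a_k}) = 1 + p_k + \dots + p_k^{a_k}$ would be a smooth value of a polynomial with at least two distinct roots (here $a_k \ge 2$ is even by Lemma \ref{lem-df}), supported on a bounded set of primes; finiteness results for such values, together with $p_k^{a_k} < \s(p_k^{a_k}) < p_k^{a_k} \cdot p_k/(p_k-1)$ and the order-parity obstructions used in the present paper, would bound $p_k$ and $a_k$, and the induction on $k$ would close the argument.

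The main obstacle is precisely the parameter $m = n/d$. Unlike $\s(n)/n$, which for $\omega(n) = k$ is confined to a bounded interval, $m$ is a priori unbounded: the crude inequality $2 - 1/m = \s(n)/n < \prod_{i=1}^k p_i/(p_i-1)$ bounds $m$ only when that product is less than $2$, which already fails at $k = 3$. So one must either (i) establish a uniform bound $m = O_k(1)$ by a finer analysis of the equation $\s(n)/n = (2m-1)/m$, exploiting the self-referential constraint $m \mid n$ to limit how closely $\s(n)/n$ can approach $2$, or (ii) carry through the Dickson-type induction with $m$ retained as an unknown, which appears to require a genuinely new idea; I expect one of (i), (ii) to be the crux, everything else being the computational machinery already developed in Section \ref{proof}. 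A more modest intermediate target is the single case $k = 4$, which would also settle Conjecture \ref{conj-1}: here one continues the casework of this paper by bounding $p_3$ and then $p_4$ for each of $p_2 \in \{5, 7\}$, bounding $m$ (that is, $D$) within each branch as in Lemmas \ref{p2-1}--\ref{p2-5}, and finishing with the order-parity arguments and the comparisons $f = g$ — a finite but substantial computation in the spirit of Tang, Ma and Feng \cite{colloq}.
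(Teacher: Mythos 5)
This statement is Conjecture \ref{conj-2}: the paper poses it as an open problem and offers no proof, so there is no argument of the authors to compare yours against beyond the partial evidence of Theorem \ref{th-1} and the known cases $k=2,3$ from \cite{collo,df}. Your submission is accordingly a research programme rather than a proof, and to your credit you identify where it breaks; I want to confirm that the break you flag is not a technicality but the entire difficulty, so the proposal does not establish the statement.

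Concretely, the Dickson-type induction requires every prime factor of $\s(p_k^{a_k})$ to lie in a set of size bounded in terms of $k$. Writing $\s(n)=d(2D-1)$ with $D=n/d$, the prime factors of $d$ are indeed among $p_1,\dots,p_k$, but those of $2D-1$ are uncontrolled unless $D$ itself is bounded in terms of $k$, and it is not: $d$ can be as small as $1$ (the almost perfect case), giving $D=n$. The only a priori constraint, $2-1/D=\s(n)/n<\prod_{i}p_i/(p_i-1)$, bounds $D$ only when the product on the right is below $2$, which already fails for $k=3$ (for instance $3\cdot 5\cdot 7/(2\cdot 4\cdot 6)=105/48>2$); this is why the paper can pin down $D\in\{3,9\}$ only after first trapping $p_1,p_2,p_3$ in narrow ranges, case by case, and that bootstrapping does not run uniformly in $k$. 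Your alternatives (i) and (ii) are exactly the missing idea, not a reduction of it: (i) is an unproved uniform bound on $D$, and in the subcase $d=1$ — which by Kishore's theorem \cite{kk} first becomes relevant at $k=6$ — the quantity $2D-1=2n-1$ gives the Dickson mechanism no purchase whatsoever; (ii) is acknowledged to need a new idea. The remaining ingredients you list (abundancy bounds on the small primes, order-parity obstructions, smoothness of $1+p_k+\dots+p_k^{a_k}$) are consistent with the machinery of Section \ref{proof} and would plausibly finish the bounded-$D$ branches, but they are all conditional on the step that is missing. Your closing paragraph on $k=4$ is a reasonable description of how one would attack Conjecture \ref{conj-1} by extending the paper's casework, and the Note Added indicates the $p_2=7$ branch has since been carried out in \cite{def-2}, but that too is separate from the general statement under review.
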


The case $k=3$ in Conjecture \ref{conj-2} corresponds to the main result of Tang and Feng \cite{df}. Theorem \ref{th-1} gives some evidence in support of Conjecture \ref{conj-1} and the case for $k=4$ in Conjecture \ref{conj-2} by eliminating several candidates of primes. The only cases to eliminate now are $p_2=5$ or $7$.

We should note that our methods although works for providing bounds in support of Conjecture \ref{conj-2}, however a lot of work is required to give very explicit values of primes. As an example of the type of results we are referring to, we present the following theorem.

\begin{thm}\label{th-2}
If $n$ is an odd deficient perfect number with five distinct prime factors, $p_1<p_2<p_3<p_4<p_5$ such that $n=p_1^{a_1}.p_2^{a_2}.p_3^{a_3}.p_4^{a_4}.p_5^{a_5}$ with positive integers $a_i$, then $3\leq p_1\leq 5$
\end{thm}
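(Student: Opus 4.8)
The plan is to mimic exactly the two-step argument that pins down $p_1$ in Theorem \ref{th-1}: first exclude the possibility $p_1\geq 7$ by a crude harmonic-sum estimate, then do a slightly more delicate job at $p_1=5$ using the order-of-$5$ (or small-prime divisibility) obstruction together with the function $f$. Concretely, for the first step I would observe that if $p_1\geq 7$ then $p_2,p_3,p_4,p_5\geq 11,13,17,19$, so
\begin{equation*}
2=\frac{\s(n)}{n}+\frac{d}{n}<\frac{7.11.13.17.19}{6.10.12.16.18}+\frac{1}{7}<2,
\end{equation*}
a contradiction; hence $p_1\leq 5$. (One should double-check the numerics, but $\frac{7\cdot11\cdot13\cdot17\cdot19}{6\cdot10\cdot12\cdot16\cdot18}\approx 1.80$ and $1/7\approx 0.14$, so the sum is below $2$.)

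The harder half is to show $p_1\geq 3$, i.e. to rule out nothing — since $p_1$ odd already forces $p_1\geq 3$, the content of the theorem is really just $p_1\leq 5$, so the first paragraph essentially finishes it. Thus I would present the proof as: by Lemma \ref{lem-df} all $a_i$ are even and $p_1\geq 3$; the displayed inequality above rules out $p_1\geq 7$; therefore $p_1\in\{3,5\}$, which is the claim. If instead one wants the sharper statement that both $p_1=3$ and $p_1=5$ genuinely remain as live cases (rather than proving one of them impossible), no further work is needed — the theorem as stated only asserts the containment $3\leq p_1\leq 5$.

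If a referee wanted more, the natural next target would be to attempt to exclude $p_1=5$ by the same machinery used in Lemmas \ref{p2-1}--\ref{p2-5}: when $p_1=5$ one bounds $p_2$ (here $p_2\leq 13$ or so by an inequality like $\frac{5.7.11.13.17}{4.6.10.12.16}+\frac13<2$ failing, forcing small $p_2$), then for each small $D$ one writes $\s(n)=\frac{\text{(small)}}{D}\cdot 5^{a_1}\cdots$ and derives a contradiction from the order of the relevant small prime modulo the $p_i$ being even while all $a_i$ are even, or from an $f$-versus-$g$ numerical incompatibility. The main obstacle, and the reason the paper stops where it does, is that the case $p_1=5$ (like $p_2=5,7$ for Theorem \ref{th-1}) produces too many sub-branches for the order argument to close cleanly, because the deficient divisor $d$ can be a larger power of $5$ and the "escape primes" in the $\ord$ argument are not forced to be even as reliably; so I would not claim to eliminate $p_1=5$ here and would leave the statement as the containment $3\leq p_1\leq 5$.
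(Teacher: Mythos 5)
Your proposal is correct and is essentially identical to the paper's own proof: the paper also rules out $p_1\geq 7$ via the single inequality $2=\frac{\s(n)}{n}+\frac{d}{n}<\frac{7\cdot 11\cdot 13\cdot 17\cdot 19}{6\cdot 10\cdot 12\cdot 16\cdot 18}+\frac{1}{7}<2$ and concludes $3\leq p_1\leq 5$, with the lower bound being automatic from oddness. (Minor quibble: the product is about $1.56$, not $1.80$, but the conclusion is unaffected.)
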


\begin{proof}
Indeed, if this is the case, then we have $2=\dfrac{\s(n)}{n}+\dfrac{d}{n}$ where $d$ is the deficient divisor. Clearly if $p_1\geq 7$, we have $$2=\frac{\s(n)}{n}+\frac{d}{n}<\frac{7.11.13.17.19}{6.10.12.16.18}+\frac{1}{7}<2,$$ which is impossible. So, $3\leq p_1\leq 5$.
\end{proof}

\begin{rem}
A case by case analysis of $p_1=3$ and $p_1=5$ in Theorem \ref{th-2}, as we have done in Section \ref{proof} would help in finding bounds for $p_2$, as well as eliminate some of the choices. But, we do not explore this further. It is our belief that some other method must come into place to say something about these type of results. We hope to discuss the cases for $p_2=5,7$ in a subsequent paper.
\end{rem}

\subsection*{Note Added}

The case for $p=7$ is discussed by the second author \cite{def-2}, where he proves that there is only one such deficient perfect number when $7$ divides $n$.

\subsection*{Acknowledgements}
The second author is supported by the Austrian Science Foundation FWF, START grant Y463.

\end{document}